\documentclass[12pt, reqno, a4paper,oneside]{amsart}
\usepackage{graphicx}
\usepackage{subfig}
\usepackage{color}
\usepackage{xcolor}
\usepackage[linesnumbered,ruled,vlined]{algorithm2e}
\usepackage{subcaption}
\usepackage{hyperref}
\usepackage{booktabs}
\usepackage[top=2.5cm,bottom=2.0cm,left=2.0cm,right=2.0cm]{geometry}
\usepackage{amsfonts, amsmath, amssymb, amsbsy, amsthm}
\usepackage{environments}
\usepackage{mathrsfs}
\usepackage{array}
\usepackage{tabularx} 
\numberwithin{theorem}{section}
\numberwithin{equation}{section}

\renewcommand{\thetheorem}{\arabic{section}.\arabic{theorem}}
\renewcommand{\thedefinition}{\arabic{section}.\arabic{definition}}

\renewcommand{\arraystretch}{1.5}

\newcommand{\uu}{\boldsymbol{u}}
\newcommand{\xx}{\boldsymbol{x}}

\title[Discrete-Time RFM for PDEs]{A Discrete-Time Random Feature Method for Nonlinear Evolution Equations with Implicit-Explicit Runge--Kutta Time Stepping}

\author{Haoran Zhou and Zhaohui Fu and Yangshuai Wang and Xinlong Feng}

\begin{document}

\begin{abstract}
    We study a discrete-time random feature method for nonlinear, time-dependent partial differential equations. In contrast to continuous-time formulations that treat time as an additional input variable, the method advances the solution step by step, with each time level computed from previously available states. The spatial solution at each step is represented in the random feature trial space, and the time discretization is given by an implicit-explicit Runge--Kutta (IMEX-RK, 4 stages, third-order) scheme. After splitting the operator into linear and nonlinear parts, each stage admits a linear least-squares formulation, which avoids nonlinear least-squares solves. We also derive a global error estimate for the fully discrete method, separating the contributions of the stage-wise RFM approximation, perturbations in the least-squares coefficients, and the temporal discretization. Numerical experiments for the Allen--Cahn, Burgers, Korteweg--De Vries, and Cahn--Hilliard equations show relative $L^2$-errors of order $10^{-6}$ and convergence rates consistent with the third-order IMEX scheme. A comparison with an IMEX-PINN variant shows that the proposed method achieves higher accuracy at substantially lower computational cost.
\end{abstract}

\maketitle

\section{Introduction}
\label{sec:intro}

Numerical methods for partial differential equations (PDEs) based on machine learning have attracted growing interest. Representative examples include Physics-Informed Neural Networks (PINNs)~\cite{cai2021physics, karniadakis2021physics}, Deep Ritz~\cite{yu2018deep}, Weak Adversarial Networks~\cite{zang2020weak}, and random feature methods~\cite{chen2022bridging, chen2023random, chi2024random}. These methods benefit from flexible trial spaces and can handle complex geometries or data-driven settings; see also~\cite{chai2021deep, choudhary2022recent, voulodimos2018deep, young2018recent, otter2020survey, saha2021hierarchical} for reviews of machine learning in scientific computing.

Among these methods, the random feature method (RFM)~\cite{chen2022bridging, chen2023random, chi2024random} is computationally attractive because the hidden parameters of the neural trial space are fixed after random initialization, so the unknown coefficients enter linearly and can be determined by a least-squares solve. The approximation combines random feature basis functions with partition-of-unity (PoU) functions, a construction closely related to randomized neural networks and extreme learning machines~\cite{schmidt1992feed, zhang2020new, nakajima2021reservoir, dong2021local, wang2024extreme, li2019towards, scardapane2017randomness, wojcik2019training}. Since its introduction, RFM has been extended in several directions: adaptive feature and collocation point distributions for resolving local structures~\cite{deng2025adaptivefeaturecapturemethod}, Christoffel-function-based sampling for improved approximation quality~\cite{adcock2026}, randomized preconditioned iterative solvers for the resulting linear systems~\cite{TAN2026117255}, randomized Newton-type methods for nonlinear least-squares systems~\cite{tan2025}, asymptotic-preserving formulations for multiscale transport equations~\cite{CHEN2025114103}, weak-form extensions~\cite{kuvakin2025weakrandomfeaturemethod}, and quantum computing adaptations~\cite{hu2025quantumrandomfeaturemethod}. These advances, however, focus primarily on stationary or mildly time-dependent settings and do not address the two central difficulties that arise when RFM is applied to nonlinear evolution equations.

The first difficulty is temporal discretization. Much of the existing work on neural PDE solvers adopts a continuous-time formulation~\cite{raissi2019physics}, where time is treated as an additional input variable. This viewpoint does not enforce causal time marching and may become inefficient or inaccurate when the solution changes rapidly. Discrete-time formulations have been studied for PINNs and related models~\cite{raissi2019physics, jagtap2020extended, stiasny2021learning, moya2023dae}, but a corresponding framework for RFM remains limited. The second difficulty is nonlinearity. After time discretization, a direct RFM formulation for a nonlinear evolutionary PDE leads to a nonlinear least-squares problem, removing the linear structure that makes RFM computationally advantageous. This is particularly restrictive for strongly nonlinear equations such as phase-field models~\cite{chen2002phase, badalassi2003computation}, where standard nonlinear least-squares methods~\cite{gill1978algorithms} may be inefficient or unreliable.

Deng et al.~\cite{DengHe2025_PhysFluids} recently combined Runge--Kutta time discretization with RFM for multiphase flow problems. Their results show that classical time stepping can be integrated with RFM, but the scheme is restricted to second-order accuracy and the treatment of strongly nonlinear terms remains limited.

The present work addresses both difficulties by combining RFM in space with an implicit-explicit (IMEX) Runge--Kutta time discretization~\cite{ascher1995implicit}. The method advances the solution one step at a time: the stiff linear part is handled implicitly, while the nonlinear part is treated explicitly. As a result, the unknown stage solution remains linear in the RFM coefficients, so each stage is computed from a linear least-squares problem. This distinguishes the proposed method from continuous-time RFM formulations on the full space-time domain and from discrete-time neural solvers that require repeated nonlinear optimization or transfer learning across time steps.

The main contributions of this work are as follows:
\begin{itemize}
    \item We formulate a discrete-time RFM scheme for time-dependent PDEs that advances the solution sequentially in time, in contrast to continuous-time space-time formulations.
    \item We combine this scheme with an IMEX Runge--Kutta discretization so that each stage of the nonlinear problem reduces to a linear least-squares solve.
    \item We derive a global error bound for the fully discrete method that separates the contributions of the stage-wise RFM approximation, perturbations in the least-squares coefficients, and the temporal discretization. Numerical results for the Allen--Cahn, Burgers, Korteweg--De Vries, and Cahn--Hilliard equations show relative \(L^2\)-errors of order $10^{-6}$ and overall third-order convergence over the range of step sizes tested.
\end{itemize}

The remainder of the paper is organized as follows. Section~\ref{sec:rfm} reviews RFM for static problems and introduces the discrete-time formulation for time-dependent PDEs. Section~\ref{sec:IMEX} presents the IMEX-RK algorithm and the error analysis. Section~\ref{sec:numer} reports the numerical experiments. Section~\ref{sec:conclusion} concludes the paper.

\section{Random Feature Method (RFM)}
\label{sec:rfm}

This section establishes the notation for RFM and introduces the discrete-time formulation used later. Section~\ref{sec:sub:static} recalls the spatial trial space and least-squares formulation for stationary PDEs. Section~\ref{sec:sub:time-dep} extends the notation to evolution equations and uses Crank--Nicolson as an illustrative example of a discrete-time RFM construction. The IMEX-RK method used in the rest of the paper is introduced in Section~\ref{sec:IMEX}.

\subsection{RFM for Static Problems}
\label{sec:sub:static}

Let $\Omega \subset \mathbb{R}^d$ be a bounded, connected domain with dimension $d \in \mathbb{N}^+$, and let $\xx=(x_1,\ldots,x_d)^{T}\in\Omega$. We consider the stationary problem of finding a possibly vector-valued solution $\uu(\xx)\in\mathbb{R}^{d_u}$ such that
\begin{equation}
\begin{cases}
\mathcal{L} \uu(\xx) = \boldsymbol{f}(\xx), & \xx \in \Omega, \\
\mathcal{B} \uu(\xx) = \boldsymbol{g}(\xx), & \xx \in \partial \Omega,
\end{cases}
\end{equation}
where $\mathcal{L}$ is a differential operator and $\mathcal{B}$ is a boundary operator. We use boldface to denote vector-valued quantities.

The domain decomposition underlying the RFM approximation~\cite{chan1994domain, smith1997domain} is defined as follows. Let
$\widehat{\Omega} = [a_1, b_1] \times [a_2, b_2] \times \ldots \times [a_d, b_d] =: \prod_{i=1}^d [a_i, b_i]$ 
be a hyper-rectangle containing $\Omega$. We partition $\widehat{\Omega}$ into $M$ non-overlapping hyper-rectangles,
\[
\widehat{\Omega} = \bigcup_{i=1}^M \Omega_i := \bigcup_{i=1}^M \prod_{j=1}^d [a^{(i)}_{j}, b^{(i)}_{j}],
\]
where each $\Omega_i$ is a subdomain. This decomposition plays the same role as an element partition in finite element constructions~\cite{reddy1993introduction}.

On each subdomain, we map $\Omega_i$ to the reference cube $[-1,1]^d$. For each $i=1,\ldots,M$, define
\[
\widetilde{\xx}^{(i)}=(\widetilde{x}^{(i)}_{1},\ldots,\widetilde{x}^{(i)}_{d})^T
\]
by
\[
\widetilde{\xx}^{(i)} = \frac{\xx - \boldsymbol{c}^{(i)}}{\boldsymbol{r}^{(i)}},
\]
where $\boldsymbol{c}^{(i)}$ and $\boldsymbol{r}^{(i)}$ are the center and half-width of $\Omega_i$,
\[
\boldsymbol{c}^{(i)} = \left( \frac{b^{(i)}_{1} + a^{(i)}_{1}}{2}, \ldots, \frac{b^{(i)}_{d} + a^{(i)}_{d}}{2} \right)^T \qquad\text{and}\qquad
\boldsymbol{r}^{(i)} = \left( \frac{b^{(i)}_{1} - a^{(i)}_{1}}{2}, \ldots, \frac{b^{(i)}_{d} - a^{(i)}_{d}}{2} \right)^T.
\]
This normalization is used to define local random features.

For each $\Omega_i$, RFM uses $J_n \in \mathbb{N}^+$ random features $\{\phi^{(i)}_{j}(\boldsymbol{x})\}_{1 \leq i \leq M, 1 \leq j \leq J_n}$ defined by a two-layer neural network with fixed random parameters $\boldsymbol{k}^{(i)}_{j} \in \mathbb{R}^d$ and $b^{(i)}_{j} \in \mathbb{R}$:
\begin{equation}\label{eq:phi_ij}
    \phi^{(i)}_{j}(\xx) = \sigma\big(\boldsymbol{k}^{(i)}_{j} \cdot \widetilde{\xx}^{(i)} + b^{(i)}_{j}\big), 
\end{equation}
where $\sigma$ is a nonlinear scalar activation function, typically chosen as $\tanh$ or a trigonometric function~\cite{chen2022bridging}. The components of $\boldsymbol{k}^{(i)}_{j}$ and $b^{(i)}_{j}$ are sampled independently from $\mathcal{U}([-R_m, R_m])$, where $R_m \in \mathbb{R}^+$ controls the magnitude of the random parameters. These parameters are fixed throughout the coefficient solve. 


The construction in~\eqref{eq:phi_ij} extends to multi-layer randomized neural networks by composing the mapping recursively over several layers, consistent with the framework of~\cite{schmidt1992feed}. In what follows, we restrict attention to the standard single-hidden-layer RFM.

To assemble the local features into a global trial space, we introduce partition-of-unity (PoU) functions on the reference cube. In one dimension, two commonly used choices are
\begin{align*}
\psi_{\rm a}(x) &= \mathbb{I}_{[-1, 1]}(x), \\
\psi_{\rm b}(x) &= \mathbb{I}_{[-\frac{5}{4}, -\frac{3}{4}]}(x) \frac{1 + \tanh(2\pi x)}{2} + \mathbb{I}_{[-\frac{3}{4}, \frac{3}{4}]}(x) + \mathbb{I}_{[\frac{3}{4}, \frac{5}{4}]}(x) \frac{1 - \tanh (2\pi x)}{2},
\end{align*}
where $\mathbb{I}$ denotes the characteristic function. The function $\psi_{\mathrm{a}}$ is discontinuous, while $\psi_{\mathrm{b}}$ is continuously differentiable. In higher dimensions, $\psi$ is defined by the tensor product $\psi(\boldsymbol{x}) = \prod_{i=1}^{d} \psi(x_i)$ for $\boldsymbol{x} = (x_1, \ldots, x_d)$. The PoU on each subdomain is obtained by scaling this reference function to $\Omega_i$. 

As illustrated in Figure~\ref{figs:RFM-network}, the proposed method follows a discrete-time framework. The spatial domain is first decomposed into subdomains, on each of which a local random feature representation is constructed. These local approximations are then assembled into a global trial function through the partition-of-unity mechanism. Based on this spatial representation, the solution is advanced from one time level to the next by a stage-wise IMEX-RK procedure.

\begin{figure}[htbp]
    \centering
    \includegraphics[height=7.5cm]{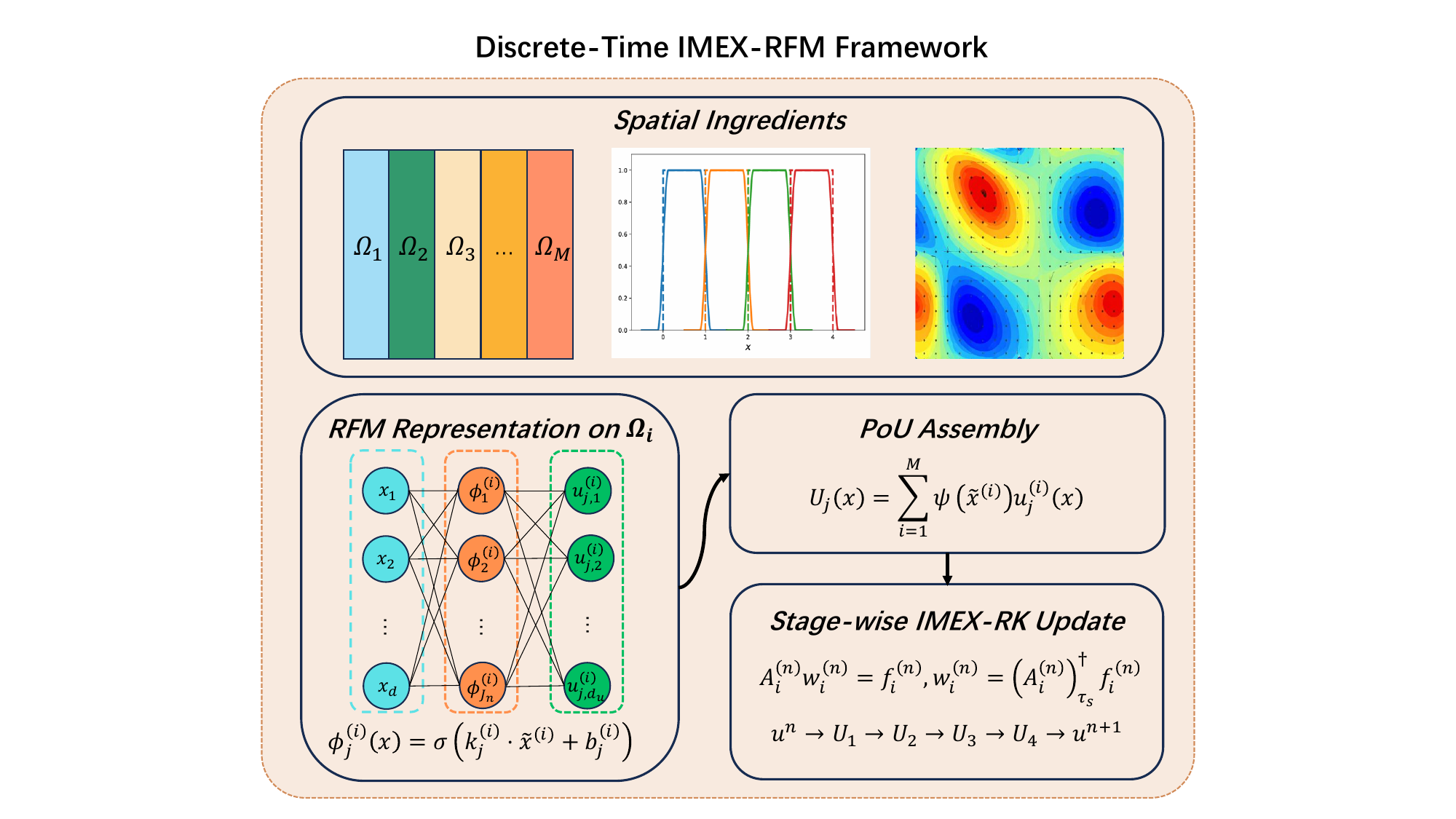}
    \caption{Illustration of the proposed discrete-time IMEX-RFM method. Local random feature approximations are constructed on subdomains, assembled by the partition-of-unity mechanism, and then updated through stage-wise IMEX-RK time stepping. }
    \label{figs:RFM-network}
\end{figure}

The global RFM trial function is then
\begin{equation}\label{eq:u_rfm}
    u_{N}(\xx; \boldsymbol{\omega}) = \sum_{i=1}^M \psi(\widetilde{\xx}^{(i)}) u^{(i)}(\xx) = \sum_{i=1}^M \psi(\widetilde{\xx}^{(i)}) \sum_{j=1}^{J_n} \omega_{ij} \phi^{(i)}_{j}(\xx),
\end{equation}
where $\omega_{ij}\in\mathbb{R}$ are unknown coefficients and $N=MJ_n$ is the total number of degrees of freedom. For vector-valued solutions with dimension $d_u$, each component is approximated by~\eqref{eq:u_rfm}, that is,
\begin{equation}\label{eq:uN}
\uu_{N}(\xx; \boldsymbol{\omega}) = \big(u_N^{1}(\xx; \boldsymbol{\omega}), \ldots, u^{d_u}_{N}(\xx; \boldsymbol{\omega})\big)^{T}.
\end{equation}

To determine the coefficients, we enforce the PDE and boundary conditions at collocation points and solve a weighted least-squares problem. Let $\xx^{(i)}_{1},\ldots,\xx^{(i)}_{Q}$ be $Q\in\mathbb{N}^+$ collocation points in each $\Omega_i$. Introduce positive weights $\lambda^{\rm p}_{i,k,q}>0$ and $\lambda^{\rm b}_{i,k,e}>0$ for $1\le i\le M$, $1\le k\le d_u$, and $1\le q,e\le Q$ (with $\xx_{i,e}\in\partial\Omega$), and define
$\Lambda^{\rm p}_{i,q}=\mathrm{diag}(\lambda^{\rm p}_{i,1,q},\ldots,\lambda^{\rm p}_{i,d_u,q})$
and
$\Lambda^{\rm b}_{i,e}=\mathrm{diag}(\lambda^{\rm b}_{i,1,e},\ldots,\lambda^{\rm b}_{i,d_u,e})$.
The coefficients are obtained by minimizing
\begin{equation}\label{eq:loss}
\begin{aligned}
\mathrm{Loss}(\boldsymbol{\omega}) = 
\sum_{i=1}^M \Bigg( 
&\sum_{q=1}^Q 
\left\| \Lambda^{\rm p}_{i,q} 
\left( \mathcal{L} \boldsymbol{u}_N(\xx^{(i)}_{q}; \boldsymbol{\omega}) - \boldsymbol{f}(\xx^{(i)}_{q}) \right) \right\|_2^2 \\
&+ \sum_{\xx_{i,e} \in \partial \Omega} 
\left\| \Lambda^{\rm b}_{i,e} 
\left( \mathcal{B} \boldsymbol{u}_N(\xx_{i,e}; \boldsymbol{\omega}) - \boldsymbol{g}(\xx_{i,e}) \right) \right\|_2^2 
\Bigg),
\end{aligned}
\end{equation}
where $\boldsymbol{u}_N(\xx, \boldsymbol{\omega})$ is of the form~\eqref{eq:uN}. Equivalently,
\begin{equation}\label{eq:least-squares}
\boldsymbol{\widehat{\omega}} = \min_{\boldsymbol{\omega}} \| \mathbf{A}\boldsymbol{\omega} - \mathbf{b} \|^2,
\end{equation}
where $\mathbf{A}$ is the design matrix and $\mathbf{b}$ is the observation vector. The detailed construction of these components is given in~\cite[Section 2.2]{chen2023random}.
    
If the discontinuous PoU $\psi_{\mathrm{a}}$ is used, continuity conditions between adjacent subdomains must be enforced by adding regularization terms to~\eqref{eq:loss}~\cite{chen2022bridging}. If $\psi_{\mathrm{b}}$ is used, no such regularization is required provided that $\mathcal{L}$ is a second-order differentiable operator.

\begin{remark}
When $\mathcal{L}$ and $\mathcal{B}$ are linear, \eqref{eq:loss} is a standard linear least-squares problem and can be solved by classical algorithms~\cite{chen2024optimization}. For nonlinear evolutionary problems, a direct discrete-time formulation generally leads to nonlinear least-squares systems. Section~\ref{sec:sub:time-dep} introduces the time-dependent setting, which is combined with IMEX time stepping in Section~\ref{sec:IMEX}.
\end{remark}

\subsection{RFM for Time-Dependent PDEs}
\label{sec:sub:time-dep}

Given a final time $T > 0$, consider
\begin{equation}\label{eq:timePDEs}
\begin{aligned}
    \boldsymbol{u}_t(\boldsymbol{x}, t) &= \mathcal{N}\big[\boldsymbol{u}(\boldsymbol{x}, t)\big], && \quad (\boldsymbol{x}, t) \in \Omega \times [0, T], \\
    \mathcal{B}\boldsymbol{u}(\xx, t) &= \boldsymbol{g}(\xx), && \quad (\xx, t) \in \partial \Omega \times [0, T], \\
    \mathcal{I}\boldsymbol{u}(\xx, 0) &= \boldsymbol{h}(\xx), && \quad  \xx \in \Omega,
\end{aligned}
\end{equation}
where $\boldsymbol{g}$ and $\boldsymbol{h}$ are given functions and $\mathcal{I}$ is the operator associated with the initial condition.

Two common formulations exist. In the continuous-time approach, time is treated as another input variable and the approximation is built on the full space-time domain. In the discrete-time approach, time is discretized first and one solves a sequence of spatial problems. The latter makes temporal causality explicit, since the approximation at time level $n+1$ is computed from previously available states. The present paper adopts the discrete-time viewpoint for both computational and modeling reasons. Computationally, many neural PDE solvers, including PINN-type methods, rely on SGD or Adam to handle nonlinear optimization, and their accuracy and efficiency are often constrained by the training process itself. By separating time advancement from spatial approximation, our method reduces each update to a linear least-squares problem, which allows the direct use of classical linear solvers and avoids repeated nonlinear optimization. From the viewpoint of the underlying PDE, moreover, time and space are not on an equal footing in evolution problems. A step-by-step treatment in time, combined with a spatial approximation at each time level, therefore better reflects the causal structure and physical meaning of the dynamics.
To illustrate how a discrete-time RFM formulation is constructed, we use the Crank--Nicolson scheme as an example; the IMEX-RK algorithm employed in the paper is presented in Section~\ref{sec:IMEX}. Given a time step $\Delta t$, consider first the scalar case, with the vector-valued case treated componentwise. Suppose $u^n_N(\xx)$ approximates the exact solution $u(\xx, n \Delta t)$ of \eqref{eq:timePDEs}. The Crank--Nicolson update reads
\begin{equation}\label{eq:crank_nicolson}
     \frac{u^{n+1}_N(\xx) - u^n_N(\xx)}{\Delta t} = \mathcal{N} \left[ \frac{u^{n+1}_N(\xx) + u^n_N(\xx)}{2} \right].
\end{equation}
With the spatial RFM approximation from Section~\ref{sec:sub:static}, we represent $u^{n+1}_N(\xx)$ by $u_N(\xx; \boldsymbol{\omega}^{n+1})$, assuming that $u_{N}(\xx; \boldsymbol{\omega}^n)$ is already known. The coefficients at the next step are then obtained by minimizing
\begin{equation}\label{eq:loss_function_discrete}
\begin{aligned}
\mathrm{Loss}^{n+1}(\boldsymbol{\omega}^{n+1}) =& 
\sum_{i=1}^M \sum_{\xx_{i,e} \in \partial \Omega} 
\left\| \Lambda^{\rm b}_{i,e} 
\left( \mathcal{B} u_N(\xx_{i,e}; \boldsymbol{\omega}^{n+1}) - \boldsymbol{g}(\xx_{i,e}) \right) \right\|_2^2 \\
&+ \sum_{i=1}^M \sum_{q=1}^Q 
\left\| \Lambda^{\rm p}_{i,q} \left( 
\frac{u_N(\xx_q^{(i)}; \boldsymbol{\omega}^{n+1}) - u_N(\xx_q^{(i)}; \boldsymbol{\omega}^{n})}{\Delta t} 
\right. \right. \\
&\quad \left. \left.
- \mathcal{N} \left[ \frac{u_N(\xx_q^{(i)}; \boldsymbol{\omega}^{n+1}) + u_N(\xx_q^{(i)}; \boldsymbol{\omega}^{n})}{2} \right] 
\right) \right\|_2^2.
\end{aligned}
\end{equation}
When the spatial approximation is replaced by a PINN ansatz, related constructions are often referred to as discrete PINNs~\cite{raissi2019physics}.

More generally, given a time-differencing operator $\mathfrak{D}$ and the corresponding spatial combination $\mathfrak{U}$, the next-step approximation is defined by
\begin{equation}\label{eq:general_discrete}
    \mathfrak{D}\big(x; \Delta t, u^n, u^{n+1}, \ldots, u^{n+k_{\rm d}}\big) = \mathcal{N} \big[\mathfrak{U}\big(x; u^n, u^{n+1}, \ldots, u^{n+k_{\rm u}}\big)\big],
\end{equation}
where $k_{\rm d}$ and $k_{\rm u}$ depend on the chosen time discretization. For Crank--Nicolson, $k_{\rm d}=k_{\rm u}=1$. The loss function corresponding to~\eqref{eq:general_discrete} is obtained from~\eqref{eq:loss_function_discrete} in the same way. In the remainder of the paper, we specialize this framework to an implicit-explicit (IMEX) scheme~\cite{ascher1995implicit}; the resulting algorithm is given in Section~\ref{sec:IMEX}.

The discrete-time viewpoint leads to the step-by-step update
\[
u_N(\xx; \boldsymbol{\omega}^0) 
\xrightarrow{\text{RFM}} 
u_N(\xx; \boldsymbol{\omega}^1) 
\xrightarrow{\text{RFM}} 
\cdots 
\xrightarrow{\text{RFM}} 
u_N(\xx; \boldsymbol{\omega}^n) 
\xrightarrow{\text{RFM}} 
u_N(\xx; \boldsymbol{\omega}^{n+1}) 
\xrightarrow{\text{RFM}} 
\cdots,
\]
where $u_N(\xx; \boldsymbol{\omega}^0)$ is obtained by fitting the initial condition.

\begin{remark}[Stability]
\label{re:stab}
The step-by-step solve of~\eqref{eq:general_discrete} relies on the accuracy of the spatial RFM approximation at each time level. This motivates the review of the static formulation in Section~\ref{sec:sub:static} and the error analysis in Section~\ref{sec:sub:error}.
\end{remark}

\begin{remark}[Linearity of the stage solve]
\label{re:eff}
For RFM, direct application of Crank--Nicolson or similar schemes to nonlinear problems produces nonlinear least-squares systems. The IMEX formulation in Section~\ref{sec:IMEX} avoids this by keeping each stage linear in the unknown coefficients, eliminating the need for iterative nonlinear solvers at every time step.
\end{remark}

\section{IMEX-RK(4,3) Based RFM Method and Error Analysis}
\label{sec:IMEX}

\subsection{Stage-wise Linear Systems}
\label{sec:sub:imex-rfm}

This subsection formulates the IMEX-RK(4,3) discretization in the RFM trial space and derives the stage-wise algebraic systems.

Following Section~\ref{sec:sub:time-dep}, we consider the evolution problem
\begin{equation}
\uu_t(\xx,t)=\mathcal{N}(\uu(\xx,t)), \qquad t\in(0,T], \qquad \uu(\xx,0)=\uu^0(\xx),
\label{eq:3.1}
\end{equation}
where \(\uu(\xx,t)\in\mathbb{R}^{d_u}\). We split the operator as
\begin{equation}
\mathcal{N}(\uu)=\mathcal{L}(\uu)+\mathcal{G}(\uu),
\label{eq:3.2}
\end{equation}
where \(\mathcal{L}\) is the stiff linear part and \(\mathcal{G}\) is the nonlinear part.

In each stage \(j\), we write
\begin{equation}
U_j(\xx)
=
\sum_{i=1}^{M}\psi\big(\widetilde{\xx}^{(i)}\big)\sum_{k=1}^{J_n}\omega_{ik}^{(j)}\phi_k^{(i)}(\xx),
\label{eq:3.3c}
\end{equation}
where \(U_j\) is the \(j\)-th stage solution in the current IMEX-RK step.

As a baseline, the first-order IMEX step is
\begin{equation}
\frac{\uu^{n+1}-\uu^n}{\Delta t}
=
\mathcal{L}(\uu^{n+1})+\mathcal{G}(\uu^n).
\label{eq:3.4}
\end{equation}

We now use an \(s\)-stage IMEX-RK scheme with coefficients \(\{a_{ij}\}\), \(\{\hat{a}_{ij}\}\), \(\{b_i\}\), and \(\{\hat{b}_i\}\). The corresponding Butcher tableau is listed in Appendix Table~\ref{tab:imex-rk-coeff}. The stage equations are
\begin{equation}
U_i
=
\uu^n
+
\Delta t \sum_{j=1}^{i} a_{ij} \mathcal{L}(U_j)
+
\Delta t \sum_{j=1}^{i-1} \hat{a}_{ij} \mathcal{G}(U_j),
\qquad i=1,\dots,s,
\label{eq:3.5}
\end{equation}
and the update is
\begin{equation}
\uu^{n+1}
=
\uu^n
+
\Delta t \sum_{i=1}^{s} b_i \mathcal{L}(U_i)
+
\Delta t \sum_{i=1}^{s} \hat{b}_i \mathcal{G}(U_i).
\label{eq:3.6}
\end{equation}
Separating the current-stage term gives
\begin{equation}
\left(I-\Delta t\,a_{ii}\mathcal{L}\right)U_i
=
\uu^n
+
\Delta t \sum_{j=1}^{i-1} a_{ij}\mathcal{L}(U_j)
+
\Delta t \sum_{j=1}^{i-1} \hat{a}_{ij}\mathcal{G}(U_j),
\label{eq:3.7}
\end{equation}
for \(i=1,\dots,s\). This form makes the stage structure explicit: the unknown at stage \(i\) is only \(U_i\), and all terms involving \(U_j\) with \(j<i\) have already been computed and enter the right-hand side as known quantities. Since \(\mathcal{L}\) is linear and treated implicitly, the left-hand side depends linearly on \(U_i\). Since \(\mathcal{G}\) is treated explicitly, it is never evaluated at the current unknown stage. Consequently, each stage equation is linear in \(U_i\).

For the four-stage scheme used in the experiments, the stage equations are
\begin{equation}
\left(I-\Delta t\,a_{11}\mathcal{L}\right)U_1=\uu^n,
\label{eq:3.8a}
\end{equation}
\begin{equation}
\left(I-\Delta t\,a_{22}\mathcal{L}\right)U_2
=
\uu^n
+
\Delta t\,a_{21}\mathcal{L}(U_1)
+
\Delta t\,\hat{a}_{21}\mathcal{G}(U_1),
\label{eq:3.8b}
\end{equation}
\begin{equation}
\left(I-\Delta t\,a_{33}\mathcal{L}\right)U_3
=
\uu^n
+
\Delta t\left[a_{31}\mathcal{L}(U_1)+a_{32}\mathcal{L}(U_2)\right]
+
\Delta t\left[\hat{a}_{31}\mathcal{G}(U_1)+\hat{a}_{32}\mathcal{G}(U_2)\right],
\label{eq:3.8c}
\end{equation}
\begin{equation}
\begin{aligned}
\left(I-\Delta t\,a_{44}\mathcal{L}\right)U_4
&=
\uu^n
+
\Delta t\left[a_{41}\mathcal{L}(U_1)+a_{42}\mathcal{L}(U_2)+a_{43}\mathcal{L}(U_3)\right] \\
&\quad
+
\Delta t\left[\hat{a}_{41}\mathcal{G}(U_1)+\hat{a}_{42}\mathcal{G}(U_2)+\hat{a}_{43}\mathcal{G}(U_3)\right],
\end{aligned}
\label{eq:3.8d}
\end{equation}
and
\begin{equation}
\uu^{n+1}
=
\uu^n
+
\Delta t \sum_{i=1}^{4} b_i \mathcal{L}(U_i)
+
\Delta t \sum_{i=1}^{4} \hat{b}_i \mathcal{G}(U_i).
\label{eq:3.8e}
\end{equation}

To discretize these stage equations in space, we represent each stage solution in the RFM trial space. For each stage \(i\), we write
\begin{equation}
\widetilde{U}_i(\xx)
=
\sum_{m=1}^{M}\psi\big(\widetilde{\xx}^{(m)}\big)\sum_{k=1}^{J_n}\omega_{mk}^{(i)}\phi_k^{(m)}(\xx),
\label{eq:3.9}
\end{equation}
where \(\omega_{mk}^{(i)}\) are the unknown coefficients collected in the stage vector \(w_i^{(n)}\).

The linearity of the stage solve is preserved after this substitution. The representation \eqref{eq:3.9} is linear in \(\omega_{mk}^{(i)}\), and \(\mathcal{L}(\widetilde{U}_i)\) is also linear in \(\omega_{mk}^{(i)}\) because \(\mathcal{L}\) is linear. Hence the left-hand side of \eqref{eq:3.7}, evaluated with \(\widetilde{U}_i\), is linear in the unknown stage coefficients. On the right-hand side, all terms depend only on previously computed stages \(U_j\), \(j<i\). After collocation, each stage reduces to a linear least-squares system in coefficient space:
\begin{equation}
A_i^{(n)} w_i^{(n)} = f_i^{(n)}, \qquad i=1,\dots,s,
\label{eq:3.10}
\end{equation}
where \(w_i^{(n)}\) collects the coefficients in the stage expansion \eqref{eq:3.9}. In practice, the stage coefficients are computed by the truncated-SVD pseudoinverse:
\begin{equation}
w_i^{(n)}=\left(A_i^{(n)}\right)_{\tau_s}^{\dagger} f_i^{(n)},
\label{eq:3.11}
\end{equation}
where \(\tau_s\) is the singular-value cutoff. This regularizes the solve when the least-squares matrix is ill-conditioned.

Once the stage coefficients are available, \(\uu^{n+1}\) is obtained from \eqref{eq:3.8e}. The time update takes the form
\[
\uu^0 \longrightarrow \{U_i^0\}_{i=1}^4 \longrightarrow \uu^1 \longrightarrow \cdots \longrightarrow \uu^K.
\]
At each time level, the same loop of stage solve and time update is repeated. 

The full procedure is summarized in Algorithm~\ref{alg:imex-rfm}.

\begin{algorithm}[H]
\caption{IMEX-RK(4,3) Based RFM Algorithm}
\label{alg:imex-rfm}
\KwIn{
Number of subdomains \(M\); number of random feature functions on each subdomain \(J_n\); number of collocation points per subdomain \(Q\); terminal time \(T\); number of time steps \(K\); IMEX-RK(4,3) coefficients \(a_{ij}\), \(\hat{a}_{ij}\), \(b_i\), and \(\hat{b}_i\); range of uniform initialization \(R_m\); singular-value cutoff \(\tau_s\).
}
\KwOut{
Approximate solutions \(\uu^1,\dots,\uu^K\).
}
Divide \(\Omega\) into \(M\) non-overlapping subdomains\;
Sample \(Q\) collocation points in each subdomain\;
Construct the random feature basis on each subdomain\;
Fit the initial condition to obtain \(\uu^0\)\;
\For{$n=0,1,2,\ldots,K-1$}{
    \For{$i=1,2,3,4$}{
        Assemble the stage system \(A_i^{(n)}w_i^{(n)}=f_i^{(n)}\) from \eqref{eq:3.8a}--\eqref{eq:3.8d}\;
        Solve for \(w_i^{(n)}\) by the truncated-SVD pseudoinverse \eqref{eq:3.11}\;
        Construct the stage solution \(U_i\) using the RFM approximation \eqref{eq:3.9}\;
    }
    Update \(\uu^{n+1}\) using \eqref{eq:3.8e}\;
}
\Return \(\uu^1,\dots,\uu^K\)\;
\end{algorithm}
\begin{remark}[Treatment of the explicit nonlinear term] 
For the numerical examples considered below, we distinguish two forms of the explicit nonlinear term: $\mathcal{G}(\uu)=\mathcal{K}(\uu)$ and $\mathcal{G}(\uu)=\mathcal{F}\mathcal{K}(\uu)$, where $\mathcal{F}$ is a spatial differential operator. We call the former Type-I and the latter Type-II. For Type-I problems, $\mathcal{G}(U_j)$ is evaluated pointwise once $U_j$ is known. For Type-II problems, the differential operator is applied after the stage solution has been represented in the RFM form~\eqref{eq:3.3c}. In this case, $\mathcal{F}\mathcal{K}(U_j)$ is evaluated from the above representation by automatic differentiation during assembly.

\end{remark}

\subsection{Error Analysis}
\label{sec:sub:error}

This subsection derives the error estimate for the IMEX-RK/RFM method. The argument separates three ingredients: the approximation property of the RFM trial space, perturbations introduced by the numerical least-squares solve, and the time-discretization error of the IMEX scheme.

\begin{lemma}\label{lem:universal}
Let \( \sigma \) be a bounded, nonconstant, piecewise continuous activation function, and let \(G_N(\xx)\) be a scalar function of the form \eqref{eq:u_rfm}, where \(N=MJ_n\) denotes the number of degrees of freedom. Then, for any \(f\in C(\Omega)\), there exists a sequence of coefficient vectors such that
\[
\lim_{N\to\infty}\|G_N - f\|_{L^2(\Omega)}=0.
\]
\end{lemma}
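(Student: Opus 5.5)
The plan is to reduce the statement to the classical universal approximation theorem for single-hidden-layer networks with bounded, nonconstant, piecewise continuous activation (Hornik; Leshno--Lin--Pinkus--Schocken). That theorem gives density in $C(K)$ on compact sets, and in $L^2$ on bounded sets, of the span of ridge functions $\sigma(\boldsymbol{k}\cdot\xx+b)$. We then transfer this density to the PoU-localized trial space \eqref{eq:u_rfm}. Because $\Omega$ is bounded, we have $\|g\|_{L^2(\Omega)}\le|\Omega|^{1/2}\|g\|_{L^\infty(\Omega)}$. It therefore suffices to produce, for every $\varepsilon>0$, coefficients with $\|G_N-f\|_{L^2(\Omega)}<\varepsilon$ once $N$ is large. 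Uniform approximation away from a set of small measure, together with a uniform bound, will be enough.

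\emph{Step 1 (fixed partition).} Take the simplest case $M=1$, or any fixed $M$ with the discontinuous PoU $\psi_{\rm a}$. Then $\sum_i\psi(\widetilde{\xx}^{(i)})=1$ a.e. on $\widehat\Omega$, and on each $\Omega_i$ we have $G_N=\sum_j\omega_{ij}\phi^{(i)}_j$. The affine map $\xx\mapsto\widetilde{\xx}^{(i)}$ is a bijection onto $[-1,1]^d$, so the features are ridge functions of $\xx$ with rescaled weights $\boldsymbol{k}^{(i)}_j/\boldsymbol{r}^{(i)}$ and shifted biases. Extend $f$ continuously to $\widehat\Omega$ by Tietze, or simply work on $\overline\Omega$ if $f$ is assumed continuous there. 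If $f$ is only continuous and possibly unbounded on the open set $\Omega$, first replace $f$ by a compactly supported continuous cutoff that is $L^2$-close. Strictly, this needs $f\in L^2(\Omega)$, which I would add as a tacit assumption. Apply the universal approximation theorem on each $\Omega_i$ to get $\sum_j\omega_{ij}\phi^{(i)}_j$ within $\varepsilon$ of $f$ in $L^2(\Omega_i)$. Summing over the finitely many $i$ gives the bound on $\Omega$. For the smooth PoU $\psi_{\rm b}$, write $f=\sum_i\psi_i f$ a.e. and approximate each $f$ on $\operatorname{supp}\psi_i$ by ridge sums. Since the $\psi_i$ are bounded by $1$ and overlap boundedly, the error is at most a constant times $\varepsilon$.

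\emph{Step 2 (the random parameters).} This is the main obstacle. The lemma as stated lets only the coefficients vary, while $\boldsymbol{k}^{(i)}_j,b^{(i)}_j$ are drawn from $\mathcal{U}([-R_m,R_m])$. The deterministic theorem instead supplies specific inner parameters $(\boldsymbol{k}_\ell^*,b_\ell^*)_{\ell\le L}$ with bounded outer weights. I would first enlarge $R_m$ if needed so that these parameters lie in the sampling box; the lemma leaves $R_m$ free, or one can let it grow with $N$. Two ways to finish are then available. The first is a continuity argument: $\sigma$ is piecewise continuous and bounded, so $(\boldsymbol{k},b)\mapsto\sigma(\boldsymbol{k}\cdot\widetilde\xx+b)$ is continuous in $L^2$ by dominated convergence, since the discontinuity set has measure zero for $\boldsymbol{k}\neq0$. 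As $J_n\to\infty$, almost surely some sampled pair falls within any prescribed neighborhood of each $(\boldsymbol{k}_\ell^*,b_\ell^*)$ (Borel--Cantelli). We then reuse the outer weights on those samples and set all other coefficients to zero. This yields an almost-sure statement, and I would interpret ``there exists a sequence'' that way. The second way is to cite the random-feature density results of Igelnik--Pao or Huang's ELM theorem directly. I would take the first route, since it is self-contained.

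\emph{Step 3 (assembly).} Combine the two steps. For given $\varepsilon$, choose a target ridge sum within $\varepsilon/3$ of $f$. Then choose $J_n$ large enough that the sampled features approximate each of its terms within $\varepsilon/(3L\max|c_\ell|)$. Finally multiply by the bounded PoU and sum over subdomains, which contributes a bounded constant. Letting $\varepsilon\to0$ along $N=MJ_n\to\infty$ gives $\|G_N-f\|_{L^2(\Omega)}\to0$.
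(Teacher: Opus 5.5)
Your argument is correct in substance, but it takes a different route from the paper. The paper's proof is a single citation. It invokes Theorem~2.1 of the cited reference, which by its hypotheses (``bounded, nonconstant, piecewise continuous'') is the Huang--Chen--Siew universal approximation theorem for incremental networks with \emph{randomly generated} hidden nodes, and then lets $N\to\infty$. This is precisely the ``second way'' you mention and set aside.

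That citation buys brevity. The randomness of the inner parameters, the probability-one convergence, and an explicit (greedy) choice of outer weights all come packaged in the cited result. What it leaves implicit is everything specific to the RFM trial space. It treats $G_N$ as one global random-feature network (effectively $M=1$ with $\psi\equiv 1$). It says nothing about the PoU assembly, the bounded sampling box $[-R_m,R_m]^{d+1}$, how $N=MJ_n\to\infty$ is to be read, or whether $f\in L^2(\Omega)$.

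Your route combines three ingredients: deterministic density (Hornik, Leshno--Lin--Pinkus--Schocken); $L^2$-continuity of $(\boldsymbol{k},b)\mapsto\sigma(\boldsymbol{k}\cdot\widetilde{\boldsymbol{x}}+b)$ for $\boldsymbol{k}\neq 0$ by dominated convergence; and a Borel--Cantelli hitting argument for the i.i.d.\ uniform samples. It is self-contained, and it surfaces the hidden hypotheses: the almost-sure reading, $J_n\to\infty$ at fixed $M$, $f\in L^2(\Omega)$, and $R_m$ large enough or growing. The $L^2$ assumption is needed because a continuous function on an open bounded set need not be square integrable, e.g.\ $1/x$ on $(0,1)$. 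The $R_m$ caveat is genuinely needed at fixed $M$ for general $\sigma$. For the clipped identity $\sigma(s)=\max(-1,\min(1,s))$ with $(d+1)R_m\le 1$, every feature is affine in $\widetilde{\boldsymbol{x}}$, so no choice of coefficients approximates a general continuous $f$.

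One small repair is needed in your $\psi_{\rm b}$ step. Writing $f=\sum_i\psi_i f$ presupposes $\sum_i\psi_i\equiv 1$ on $\Omega$. This fails at least in the layer next to $\partial\widehat{\Omega}$, where no neighbouring subdomain completes the transition; for the sine-type PoU of the original RFM construction the sum drops to $1/2$ on the outer faces. Approximate $f/\sum_k\psi_k$ on each $\operatorname{supp}\psi_i$ instead. Since $\sum_k\psi_k$ is bounded below by a positive constant on $\widehat{\Omega}$, this target is bounded, and the rest of your argument goes through unchanged.
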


\begin{proof}
Since $\sigma$ is bounded, nonconstant, and piecewise continuous, the conditions of the universal approximation theorem (Theorem~2.1 in~\cite{1650244}) are satisfied. The conclusion follows by taking $N\to\infty$.
\end{proof}

We next bound the error in the coefficient vectors. Let \(\delta U\) denote the difference between the numerical coefficient vector \(\widehat{U}\) and the optimal coefficient vector \(\widetilde{U}\) on a subdomain \(\Omega_n\). Let \(\{b_j\}_{j=1}^{J_n}\) be an orthonormal basis of \(\mathbb{R}^{J_n}\). Then there exist \(\{\delta u_j\}_{j=1}^{J_n}\) such that
\[
\delta U=\sum_{j=1}^{J_n}\delta u_j b_j,
\]
where \(\delta u_j\), \(j=1,\dots,J_n\), are independent and identically distributed random variables satisfying
\begin{equation}
E[\delta u_j]=0,\qquad E[|\delta u_j|]=\mu>0,\qquad E[\delta u_j^2]=\delta^2>0.
\label{exp:error estimate}
\end{equation}

\begin{lemma}\label{lem:coeff_error}
For the RFM approximation, let \(\widehat{U}=(\hat{u}^{i}_{nj})^\top\) and \(\widetilde{U}=(\tilde{u}^{i}_{nj})^\top\) denote the numerical coefficient vector and the optimal coefficient vector obtained by minimizing the loss function \eqref{eq:loss}, respectively. Then
\[
E_{\delta u}\left[\|\widehat{U}-\widetilde{U}\|_{2}\right]
\le
\sqrt{d_uMJ_n}\,\delta,
\]
where \(\delta\) is defined in \eqref{exp:error estimate}.
\end{lemma}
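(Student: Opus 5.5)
The plan is to reduce the bound to the second-moment assumption \eqref{exp:error estimate} through orthonormality and Jensen's inequality, treating one subdomain and one solution component at a time and then summing.

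\textbf{Step 1 (one block).} Fix a subdomain $\Omega_n$ and a component $k\in\{1,\dots,d_u\}$. The error in the corresponding block of coefficients is $\delta U=\widehat U-\widetilde U\in\mathbb{R}^{J_n}$. Expand it in the orthonormal basis $\{b_j\}$ as $\delta U=\sum_{j=1}^{J_n}\delta u_j b_j$. Parseval's identity then gives
\[
\|\delta U\|_2^2=\sum_{j=1}^{J_n}\delta u_j^2 .
\]

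\textbf{Step 2 (Jensen).} Since the square root is concave,
\[
E\big[\|\delta U\|_2\big]\le \big(E\|\delta U\|_2^2\big)^{1/2}=\Big(\sum_{j=1}^{J_n}E[\delta u_j^2]\Big)^{1/2}=\sqrt{J_n}\,\delta .
\]
This step uses only the second-moment assumption $E[\delta u_j^2]=\delta^2$. Neither the zero mean nor the first absolute moment $\mu$ is needed, and independence is not needed either, because no cross terms appear.

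\textbf{Step 3 (assembly).} The full vector $\widehat U-\widetilde U$ is the concatenation of $d_uM$ such blocks, one for each pair (subdomain, component). I apply the same orthonormal-basis assumption blockwise, or equivalently take an orthonormal basis of $\mathbb{R}^{d_uMJ_n}$ whose coordinates $\delta u_j$ are i.i.d. with the stated moments. Then $\|\widehat U-\widetilde U\|_2^2$ is a sum of $d_uMJ_n$ terms, each with expectation $\delta^2$. Applying Jensen once more to the global sum gives
\[
E_{\delta u}\big[\|\widehat U-\widetilde U\|_2\big]\le \sqrt{d_uMJ_n}\,\delta .
\]
Applying Jensen to the global sum directly is cleaner than summing the block bounds from Step 2. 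Summing block norms would give a weaker factor $d_uM\sqrt{J_n}$ by the triangle inequality, whereas squaring first and using additivity of squared norms over disjoint blocks gives the sharp $\sqrt{d_uMJ_n}$.

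\textbf{Main obstacle.} The difficulty is interpretive rather than technical. The paper states the probabilistic model only on a single subdomain, for an expansion in $\mathbb{R}^{J_n}$. I have to argue that it extends to all $d_uM$ blocks with the same $\delta$. I would state this explicitly as part of the modeling assumption, namely the same distribution on every subdomain and component. Once that is in place, the bound follows by the Pythagorean decomposition over orthogonal blocks followed by a single use of Jensen's inequality.
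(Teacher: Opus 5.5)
Your proposal is correct and follows essentially the same argument as the paper: Jensen's inequality on the full coefficient vector, then orthonormality of $\{b_j\}$, which turns $E\|\widehat U-\widetilde U\|_2^2$ into a sum of $d_uMJ_n$ second moments each equal to $\delta^2$. Your side remark is also right: orthonormality alone eliminates the cross terms, so the independence of the $\delta u^{i}_{nj}$ that the paper invokes in its last equality is not actually needed.
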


\begin{proof}
By Jensen's inequality,
\[
\begin{aligned}
\left(E_{\delta u}\left[\|\widehat{U}-\widetilde{U}\|_{2}\right]\right)^2
&\le
E_{\delta u}\left[\|\widehat{U}-\widetilde{U}\|_{2}^2\right] \\
&=
E_{\delta u}\left[
\sum_{i=1}^{d_u}\sum_{n=1}^{M}
\left\|
\sum_{j=1}^{J_n}\delta u^{i}_{nj} b_j
\right\|_2^2
\right] \\
&=
d_uMJ_n\delta^2.
\end{aligned}
\]
The last equality uses the orthonormality of \(\{b_j\}\) and the independence of \(\delta u^{i}_{nj}\). Taking square roots completes the proof.
\end{proof}

The following lemma transfers coefficient perturbations to function-space errors.

\begin{lemma}\label{lem:coeff_to_func}
Let \(\widehat{G}_M(\xx)\) and \(\widetilde{G}_M(\xx)\) be two functions of the form \eqref{eq:uN}, with corresponding coefficient vectors \(\widehat{U}=(\hat{u}^{i}_{nj})^\top\) and \(\widetilde{U}=(\tilde{u}^{i}_{nj})^\top\), respectively. Then there exists a constant \(C_1>0\) such that
\[
\|\widehat{G}_M(\xx)-\widetilde{G}_M(\xx)\|_{L^2}
\le
C_1\|\widehat{U}-\widetilde{U}\|_2.
\]
\end{lemma}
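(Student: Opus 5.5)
The plan is to use linearity of the RFM representation in the coefficients, and then bound the resulting finite linear combination of fixed functions through a Gram-matrix estimate. By \eqref{eq:u_rfm} and \eqref{eq:uN}, the difference \(\widehat{G}_M-\widetilde{G}_M\) is itself a function of the form \eqref{eq:uN}, with coefficient vector \(\delta U=\widehat{U}-\widetilde{U}\). Hence, for each component \(i=1,\dots,d_u\),
\[
\widehat{G}^i_M(\xx)-\widetilde{G}^i_M(\xx)=\sum_{n=1}^{M}\sum_{j=1}^{J_n}\delta u^{i}_{nj}\,\Phi_{nj}(\xx),
\qquad \Phi_{nj}(\xx):=\psi\big(\widetilde{\xx}^{(n)}\big)\phi^{(n)}_{j}(\xx).
\]
All random parameters \(\boldsymbol{k}^{(n)}_j,b^{(n)}_j\) are fixed after sampling, so the \(\Phi_{nj}\) are fixed deterministic functions. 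Each one is bounded because \(\sigma\) is bounded (or continuous on the compact range of its argument) and \(|\psi|\le 1\). Since \(\Omega\) is bounded, every \(\Phi_{nj}\) therefore lies in \(L^2(\Omega)\).

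Next I would introduce the Gram matrix \(\mathbf{G}\in\mathbb{R}^{N\times N}\), with \(N=MJ_n\) and entries \(\mathbf{G}_{(nj),(n'j')}=\int_\Omega \Phi_{nj}\Phi_{n'j'}\,d\xx\). For the \(i\)-th component this gives the exact identity
\[
\|\widehat{G}^i_M-\widetilde{G}^i_M\|_{L^2}^2=(\delta U^i)^\top\mathbf{G}\,\delta U^i\le \lambda_{\max}(\mathbf{G})\,\|\delta U^i\|_2^2 .
\]
Summing over the \(d_u\) components yields the claim with \(C_1=\sqrt{\lambda_{\max}(\mathbf{G})}\). Here \(\lambda_{\max}(\mathbf{G})\) is finite because \(\mathbf{G}\) is a finite symmetric positive semidefinite matrix. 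If a more explicit constant is wanted, I would instead apply Cauchy--Schwarz to the finite sum:
\[
\|\textstyle\sum \delta u_{nj}\Phi_{nj}\|_{L^2}\le \big(\sum\|\Phi_{nj}\|_{L^2}^2\big)^{1/2}\|\delta U\|_2\le \|\sigma\|_\infty|\Omega|^{1/2}\sqrt{N}\,\|\delta U\|_2 .
\]

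The proof is essentially the statement that every linear map on a finite-dimensional space is bounded, so there is no serious obstacle. The one delicate point is the dependence of \(C_1\) on the discretization. Two facts from the PoU construction sharpen it. First, with \(\psi_{\rm a}\) the supports of different subdomains are disjoint, and with \(\psi_{\rm b}\) they overlap only with neighbours. As a result, \(\mathbf{G}\) is block diagonal or block banded, and \(\lambda_{\max}(\mathbf{G})\) is controlled by the largest local block, roughly \(J_n\|\sigma\|_\infty^2\max_n|\Omega_n|\) times a bounded overlap factor. Second, \(C_1\) depends on \(N\), \(R_m\) and the random realization of the features, but not on the coefficients. I would state this dependence explicitly, because Lemma~\ref{lem:coeff_error} is combined with this lemma later and the \(\sqrt{N}\) factors then compound. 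In the final combination, \(C_1\) can be taken deterministic conditional on the fixed features, so the expectation passes through: \(E\|\widehat{G}_M-\widetilde{G}_M\|_{L^2}\le C_1 E\|\widehat{U}-\widetilde{U}\|_2\).
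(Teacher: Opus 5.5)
Your proof is correct and is essentially the paper's: the paper writes $\widehat{G}_M-\widetilde{G}_M=\Phi(\widehat{U}-\widetilde{U})$ for the linear coefficient-to-function map $\Phi$ and takes $C_1$ to be its operator norm, which is finite because $\Phi$ acts on a finite-dimensional space. The Gram-matrix bound $C_1^2\le\lambda_{\max}$ that you use as your main step appears in the paper only as the follow-up remark on computing $C_1$; your $\sqrt{\lambda_{\max}(\mathbf{G})}$ is exactly that operator norm (when all components share the same features), and your explicit tracking of how $C_1$ depends on $N$, $R_m$ and the sampled features is a worthwhile addition the paper leaves implicit.
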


\begin{proof}
For the $i$-th solution component, define the linear map $\Phi^i$ that sends a coefficient vector to the corresponding RFM function via \eqref{eq:u_rfm}. Then $\widehat{G}_M - \widetilde{G}_M = \Phi(\widehat{U} - \widetilde{U})$, where $\Phi$ acts componentwise. Since $\Phi$ is a linear map from $\mathbb{R}^{d_u M J_n}$ to $L^2(\Omega)^{d_u}$, its operator norm
\[
C_1 = \sup_{\|v\|_2=1} \|\Phi v\|_{L^2}
\]
is finite (every linear map on a finite-dimensional space is bounded). The claimed inequality follows.
\end{proof}

\begin{remark}\label{rem:C1_bound}
A computable upper bound for $C_1$ can be obtained as follows. Define $\varphi^{i}_{nj}(\xx) := \psi(\widetilde{\xx}^{(n)})\phi_j^{(n)}(\xx)$. When the PoU functions have disjoint subdomain supports (as is the case for $\psi_{\mathrm{a}}$), the cross-subdomain terms vanish. For each subdomain $\Omega_n$, let $\Phi_n$ be the Gram matrix with entries $(\Phi_n)_{jk} = \int_{\Omega_n} \varphi^{i}_{nj}(\xx)\,\varphi^{i}_{nk}(\xx)\,d\xx$. Then $C_1^2 \le \max_{n,i} \lambda_{\max}(\Phi_n)$, where $\lambda_{\max}$ denotes the largest eigenvalue. If, in addition, the random features are approximately orthogonal on each subdomain, $C_1$ reduces to $\max_{n,j,i}\|\varphi^{i}_{nj}\|_{L^2}$.
\end{remark}

The following theorem combines the coefficient error, the stage-wise approximation error, and the IMEX time-discretization error.

\begin{theorem}\label{thm:imex_main}
Let \(\uu^{e}(t_n)\) be the exact solution of \eqref{eq:3.1} at time \(t_n\), and let \(\uu^{n}\) be the solution generated by the four-stage IMEX-RK scheme \eqref{eq:3.8a}--\eqref{eq:3.8e}, written in the increment form
\begin{equation}
\uu^{n+1}
=
\uu^{n}
+
\Delta t\,H_{\mathrm{IMEX}}(\xx,t_n,\uu^{n}(\xx),\Delta t).
\label{eq:imex_increment_form}
\end{equation}
Let \(\widetilde{\uu}^{n}\) and \(\widehat{\uu}^{n}\) be the optimal and numerical RFM solutions obtained through the stage representation \eqref{eq:3.9} and the least-squares systems \eqref{eq:3.10}--\eqref{eq:3.11}, respectively. Assume that \(\widehat{\uu}^{0}=\uu^{e}(t_0)\).

Assume that the increment map \(H_{\mathrm{IMEX}}\) is Lipschitz continuous with respect to the numerical state, that is, there exists a constant \(L>0\) such that
\begin{equation}
\|H_{\mathrm{IMEX}}(\xx,t_n,\uu^{n}(\xx),\Delta t)-H_{\mathrm{IMEX}}(\xx,t_n,\uu^{e}(t_n,\xx),\Delta t)\|_{L^2}
\le
L\|\uu^{n}(\xx)-\uu^{e}(t_n,\xx)\|_{L^2},
\label{eq:imex_lipschitz_checked}
\end{equation}
for all \(n=0,\dots,K\).

Next, assume that the optimal RFM stage approximation satisfies
\begin{equation}
\|\widetilde{\uu}^{n}(\xx)-\uu^{n}(\xx)\|_{L^2}
\le
\sqrt{d_u}\,\epsilon,
\qquad n=0,\dots,K,
\label{eq:imex_rfm_assumption_checked}
\end{equation}
for a given \(\epsilon>0\). This bound follows from the universal approximation property in Lemma~\ref{lem:universal}, where $\epsilon$ depends on $N = MJ_n$ and the regularity of $\uu^n$. Finally, assume that the IMEX tableau used in \eqref{eq:3.8a}--\eqref{eq:3.8e} has temporal order \(p_t\), so that the local truncation error satisfies
\begin{equation}
\|\uu^{e}(t_{n+1},\xx)-\uu^{e}(t_n,\xx)-\Delta t\,H_{\mathrm{IMEX}}(\xx,t_n,\uu^{e}(t_n,\xx),\Delta t)\|_{L^2}
\le
C\Delta t^{p_t+1}.
\label{eq:imex_local_error_checked}
\end{equation}

Then there exist constants \(C_2,C_3>0\) such that
\begin{equation}
E_{\delta u}\left[\|\widehat{\uu}^{n}(\xx)-\uu^{e}(t_n,\xx)\|_{L^2}\right]
\le
C_2\sqrt{d_uMJ_n}\,\delta
+
\sqrt{d_u}\,\epsilon
+
C_3\Delta t^{p_t},
\qquad n=0,\dots,K.
\label{eq:imex_main_error_checked}
\end{equation}
\end{theorem}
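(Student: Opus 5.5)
The plan is a three-term triangle inequality followed by a discrete Gronwall argument for the time-stepping part. For each $n$ split
\[
\widehat{\uu}^{n}-\uu^{e}(t_n)
=
\bigl(\widehat{\uu}^{n}-\widetilde{\uu}^{n}\bigr)
+
\bigl(\widetilde{\uu}^{n}-\uu^{n}\bigr)
+
\bigl(\uu^{n}-\uu^{e}(t_n)\bigr),
\]
take $L^2$ norms and then expectations $E_{\delta u}$. Only the first term is random in $\delta u$; the other two are deterministic, so the expectation passes through them trivially.

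\textbf{Coefficient error.} The numerical and optimal RFM states are functions of the form \eqref{eq:uN}. Lemma~\ref{lem:coeff_to_func} gives $\|\widehat{\uu}^{n}-\widetilde{\uu}^{n}\|_{L^2}\le C_1\|\widehat{U}-\widetilde{U}\|_2$. Lemma~\ref{lem:coeff_error} then yields $E_{\delta u}\|\widehat{\uu}^{n}-\widetilde{\uu}^{n}\|_{L^2}\le C_1\sqrt{d_uMJ_n}\,\delta$.

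$\uu^{n+1}$ is obtained from \eqref{eq:3.8e} as a fixed linear combination of stage functions and their images under $\mathcal{L}$ and $\mathcal{G}$. So the operator $\Phi$ in Lemma~\ref{lem:coeff_to_func} should be replaced by the linear map from all stage coefficients to the updated state. This map is still finite dimensional, hence bounded. Its norm, possibly times a factor counting the four stages, is what I call $C_2$. I will read $\delta$ as the per-step perturbation level (the model \eqref{exp:error estimate} is posed directly on the state coefficient vector at level $n$), so that no accumulation over steps enters this term.

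\textbf{Spatial error.} The term $\|\widetilde{\uu}^{n}-\uu^{n}\|_{L^2}\le\sqrt{d_u}\,\epsilon$ is exactly assumption \eqref{eq:imex_rfm_assumption_checked}, with Lemma~\ref{lem:universal} justifying that $\epsilon$ can be made small.

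\textbf{Temporal error.} Let $e^n=\uu^{n}-\uu^{e}(t_n)$. Subtract the exact-solution relation from \eqref{eq:imex_increment_form} and insert $\pm\Delta t\,H_{\mathrm{IMEX}}(\xx,t_n,\uu^e(t_n),\Delta t)$. By \eqref{eq:imex_lipschitz_checked} and \eqref{eq:imex_local_error_checked} this gives
\[
\|e^{n+1}\|_{L^2}\le(1+L\Delta t)\|e^n\|_{L^2}+C\Delta t^{p_t+1}.
\]
Since $\widehat{\uu}^0=\uu^e(t_0)$, we take $\uu^0=\uu^e(t_0)$, so $e^0=0$. Unrolling,
\[
\|e^n\|_{L^2}\le C\Delta t^{p_t+1}\sum_{k<n}(1+L\Delta t)^k\le \frac{C}{L}\bigl(e^{LT}-1\bigr)\Delta t^{p_t},
\]
and we set $C_3=C(e^{LT}-1)/L$. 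Summing the three bounds gives \eqref{eq:imex_main_error_checked}.

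\textbf{Main obstacle.} The delicate point is the coefficient term. In the actual algorithm $\widehat{\uu}^{n}$ is propagated from $\widehat{\uu}^{n-1}$, not from $\uu^{n-1}$, so errors at earlier levels feed into later stages through $\mathcal{G}$. Done carefully, this would produce a Gronwall-amplified factor, which could still be absorbed into $C_2$ by using the Lipschitz bound on $H_{\mathrm{IMEX}}$. Making this rigorous requires treating the perturbations as entering at each step and repeating the Gronwall recursion with source $C_1\sqrt{d_uMJ_n}\,\delta$ per step. That recursion gives a factor of order $1/\Delta t$ unless $\delta$ is scaled with $\Delta t$. I would therefore first try the per-level interpretation of the assumptions, where \eqref{eq:imex_rfm_assumption_checked} and \eqref{exp:error estimate} are posed directly at each $n$, which makes the triangle-inequality argument above close. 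I would flag the propagated version as needing $\delta=O(\Delta t)$.
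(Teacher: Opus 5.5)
Your proposal is correct and follows the paper's proof essentially step for step: the same three-term triangle inequality, Lemmas~\ref{lem:coeff_error} and~\ref{lem:coeff_to_func} for the coefficient term, assumption \eqref{eq:imex_rfm_assumption_checked} for the spatial term, and the $(1+L\Delta t)$ recursion closed by discrete Gronwall with $\uu^0=\uu^e(t_0)$ for the temporal term, while your caveat about propagated errors matches the paper's own Remark~\ref{rem:error_structure}. One small slip in your side refinement: the map from stage coefficients to $\uu^{n+1}$ through \eqref{eq:3.8e} is not linear, because the $\mathcal{G}(U_i)$ terms are nonlinear, so finite dimensionality alone does not bound it and you would need a Lipschitz estimate instead; the paper avoids this by applying Lemma~\ref{lem:coeff_to_func} directly to the level-$n$ coefficient vectors, which is also what your main line does.
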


\begin{proof}
We split the total error into three parts:
\begin{equation}
\begin{aligned}
E_{\delta u}\left[\|\widehat{\uu}^{n}-\uu^{e}(t_n)\|_{L^2}\right]
&\le
E_{\delta u}\left[\|\widehat{\uu}^{n}-\widetilde{\uu}^{n}\|_{L^2}\right]
+
\|\widetilde{\uu}^{n}-\uu^{n}\|_{L^2}
+
\|\uu^{n}-\uu^{e}(t_n)\|_{L^2},
\end{aligned}
\label{eq:imex_split_checked}
\end{equation}
corresponding to the coefficient perturbation error, the stage-wise RFM approximation error, and the pure time-discretization error, respectively.

The first term is controlled by Lemmas~\ref{lem:coeff_error} and~\ref{lem:coeff_to_func}, giving
\begin{equation}
E_{\delta u}\left[\|\widehat{\uu}^{n}-\widetilde{\uu}^{n}\|_{L^2}\right]
\le
C_2\sqrt{d_uMJ_n}\,\delta
\label{eq:imex_part1_checked}
\end{equation}
for some constant \(C_2>0\). The second term is bounded directly by \eqref{eq:imex_rfm_assumption_checked}:
\begin{equation}
\|\widetilde{\uu}^{n}-\uu^{n}\|_{L^2}
\le
\sqrt{d_u}\,\epsilon.
\label{eq:imex_part2_checked}
\end{equation}

It remains to estimate the pure time-discretization error \(\|\uu^{n}-\uu^{e}(t_n)\|_{L^2}\). By \eqref{eq:imex_increment_form},
\[
\uu^{n+1}
=
\uu^{n}
+
\Delta t\,H_{\mathrm{IMEX}}(\xx,t_n,\uu^{n}(\xx),\Delta t).
\]
Hence,
\begin{align}
\|\uu^{n+1}(\xx)-\uu^{e}(t_{n+1},\xx)\|_{L^2}
&=
\Bigl\|
\uu^{n}(\xx)
+\Delta t\,H_{\mathrm{IMEX}}(\xx,t_n,\uu^{n}(\xx),\Delta t)
-\uu^{e}(t_{n+1},\xx)
\Bigr\|_{L^2}
\nonumber\\
&\le
\|\uu^{n}(\xx)-\uu^{e}(t_n,\xx)\|_{L^2}
\nonumber\\
&\quad
+
\Delta t\,
\|H_{\mathrm{IMEX}}(\xx,t_n,\uu^{n}(\xx),\Delta t)
-
H_{\mathrm{IMEX}}(\xx,t_n,\uu^{e}(t_n,\xx),\Delta t)\|_{L^2}
\nonumber\\
&\quad
+
\|\uu^{e}(t_{n+1},\xx)-\uu^{e}(t_n,\xx)-\Delta t\,H_{\mathrm{IMEX}}(\xx,t_n,\uu^{e}(t_n,\xx),\Delta t)\|_{L^2}
\nonumber\\
&\le
(1+\Delta t\,L)\|\uu^{n}(\xx)-\uu^{e}(t_n,\xx)\|_{L^2}
+
C\Delta t^{p_t+1},
\label{eq:imex_recursion_checked}
\end{align}
where the second inequality uses \eqref{eq:imex_lipschitz_checked} and \eqref{eq:imex_local_error_checked}. Applying the discrete Gronwall inequality to \eqref{eq:imex_recursion_checked} with $\uu^0 = \uu^e(t_0)$ yields
\begin{equation}
\|\uu^{n}(\xx)-\uu^{e}(t_n,\xx)\|_{L^2}
\le
C_3\Delta t^{p_t},
\qquad n=0,\dots,K,
\label{eq:imex_part3_checked}
\end{equation}
for some constant \(C_3>0\) independent of \(\Delta t\).

Combining \eqref{eq:imex_split_checked}, \eqref{eq:imex_part1_checked}, \eqref{eq:imex_part2_checked}, and \eqref{eq:imex_part3_checked} gives
\[
E_{\delta u}\left[\|\widehat{\uu}^{n}(\xx)-\uu^{e}(t_n,\xx)\|_{L^2}\right]
\le
C_2\sqrt{d_uMJ_n}\,\delta
+
\sqrt{d_u}\,\epsilon
+
C_3\Delta t^{p_t}.
\]
This completes the proof.
\end{proof}

\begin{remark}\label{rem:error_structure}
Theorem~\ref{thm:imex_main} decomposes the global error into three additive contributions. The first term, $C_2\sqrt{d_uMJ_n}\,\delta$, reflects the sensitivity of the computed solution to perturbations in the least-squares coefficients and grows with the number of degrees of freedom. The second term, $\sqrt{d_u}\,\epsilon$, measures the best-case spatial approximation error of the RFM trial space and decreases as $N = MJ_n$ increases (by Lemma~\ref{lem:universal}). These two terms exhibit opposite dependences on $N$: increasing the number of features improves the approximation capacity but also amplifies the coefficient perturbation. In practice, the singular-value cutoff $\tau_s$ in \eqref{eq:3.11} balances these competing effects. The third term, $C_3\Delta t^{p_t}$, is the standard temporal discretization error controlled by the step size. We note that the bound~\eqref{eq:imex_main_error_checked} treats the three error sources as decoupled at each time level; a fully coupled analysis that tracks the propagation of spatial errors through the time integrator would require additional stability assumptions on the IMEX-RK scheme composed with the RFM projection and is left for future work.
We also note that the Lipschitz assumption~\eqref{eq:imex_lipschitz_checked} is satisfied for the test problems considered in this paper. For polynomial nonlinearities such as $\mathcal{G}(u) = u - u^3$ (Allen--Cahn) or $\mathcal{G}(u) = -\nu\partial_x u$ (Burgers), the increment map $H_{\mathrm{IMEX}}$ is locally Lipschitz in $\uu$ whenever the numerical solution remains bounded, which is the case in all our experiments.
\end{remark}

\section{Numerical Experiments}
\label{sec:numer}

This section reports numerical tests for the IMEX-RFM method on five nonlinear evolution equations. The experiments are organized by increasing difficulty: we begin with a Type-I problem (Allen--Cahn), proceed to two Type-II problems with second-order (Burgers) and third-order (KdV) spatial operators, then test a fourth-order Type-II problem (Cahn--Hilliard), and conclude with a two-dimensional Type-I problem (2D Allen--Cahn). For each test case, we examine temporal convergence by measuring the relative \(L^2\)-error at the final time over a range of step sizes. When an exact solution is unavailable, the reference solution is computed with a sufficiently small time step. The parameter settings for all test cases are summarized in Appendix Table~\ref{tab:parameter_settings_all}.

For comparison, we also test an IMEX-PINN variant that replaces the RFM spatial discretization with a standard PINN while retaining the same IMEX-RK(4,3) time integrator. The PINN uses three hidden layers of width 50 with \(\tanh\) activation. Each IMEX-RK stage is trained independently for 20 epochs using the Adam optimizer with a learning rate of \(10^{-3}\), which was found sufficient for the training loss to plateau at each stage. Running times for both methods are reported in Appendix Table~\ref{tab:time_compare_all}.

All experiments were carried out on a machine with an AMD Ryzen 9 7945HX CPU and an NVIDIA GeForce RTX 4060 Laptop GPU, using Python 3.8, PyTorch 1.13, and CUDA 11.6.

\subsection{1D Allen--Cahn Equation}

We begin with the one-dimensional Allen--Cahn equation, which provides a Type-I test case (the nonlinear term contains no spatial derivatives). The model problem is
\[
\partial_t u=\varepsilon^2 \partial_{xx}u+5(u-u^3),
\qquad \varepsilon=10^{-2},
\qquad (x,t)\in[-1,1]\times(0,1].
\]
The initial condition is
\[
u(x,0)=x^2\cos(\pi x),
\]
and periodic boundary conditions are imposed together with periodicity of the first derivative:
\[
u(-1,t)=u(1,t),\qquad
\partial_x u(-1,t)=\partial_x u(1,t).
\]

The operator splitting assigns $\mathcal{L}(u) = \varepsilon^2 \partial_{xx}u$ (implicit) and $\mathcal{G}(u) = 5(u - u^3)$ (explicit). Since $\mathcal{G}$ is pointwise in $u$, this example belongs to the Type-I class. The spatial discretization follows the domain decomposition in Section~\ref{sec:sub:static}, with first-order continuity conditions imposed at subdomain interfaces. The relative \(L^2\)-error at the final time is shown in Fig.~\ref{fig:AC_results} for a sequence of time step sizes.

\begin{figure}[htbp]
    \centering
    \begin{minipage}{0.49\textwidth}
        \centering
        \includegraphics[width=\textwidth]{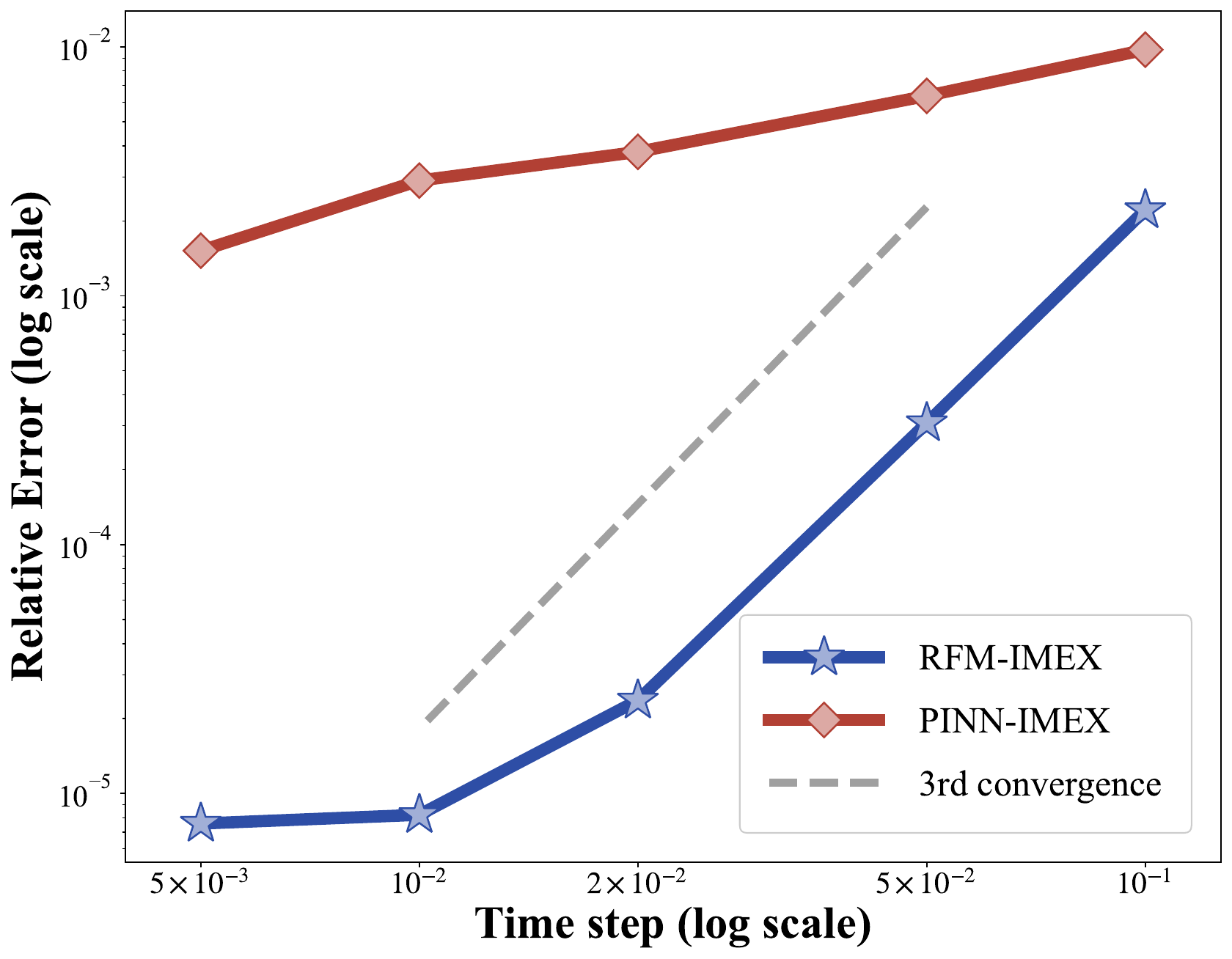}
        \par\vspace{2mm}
        (a) Relative \(L^2\)-error versus time step size
    \end{minipage}\hfill
    \begin{minipage}{0.49\textwidth}
        \centering
        \includegraphics[width=\textwidth]{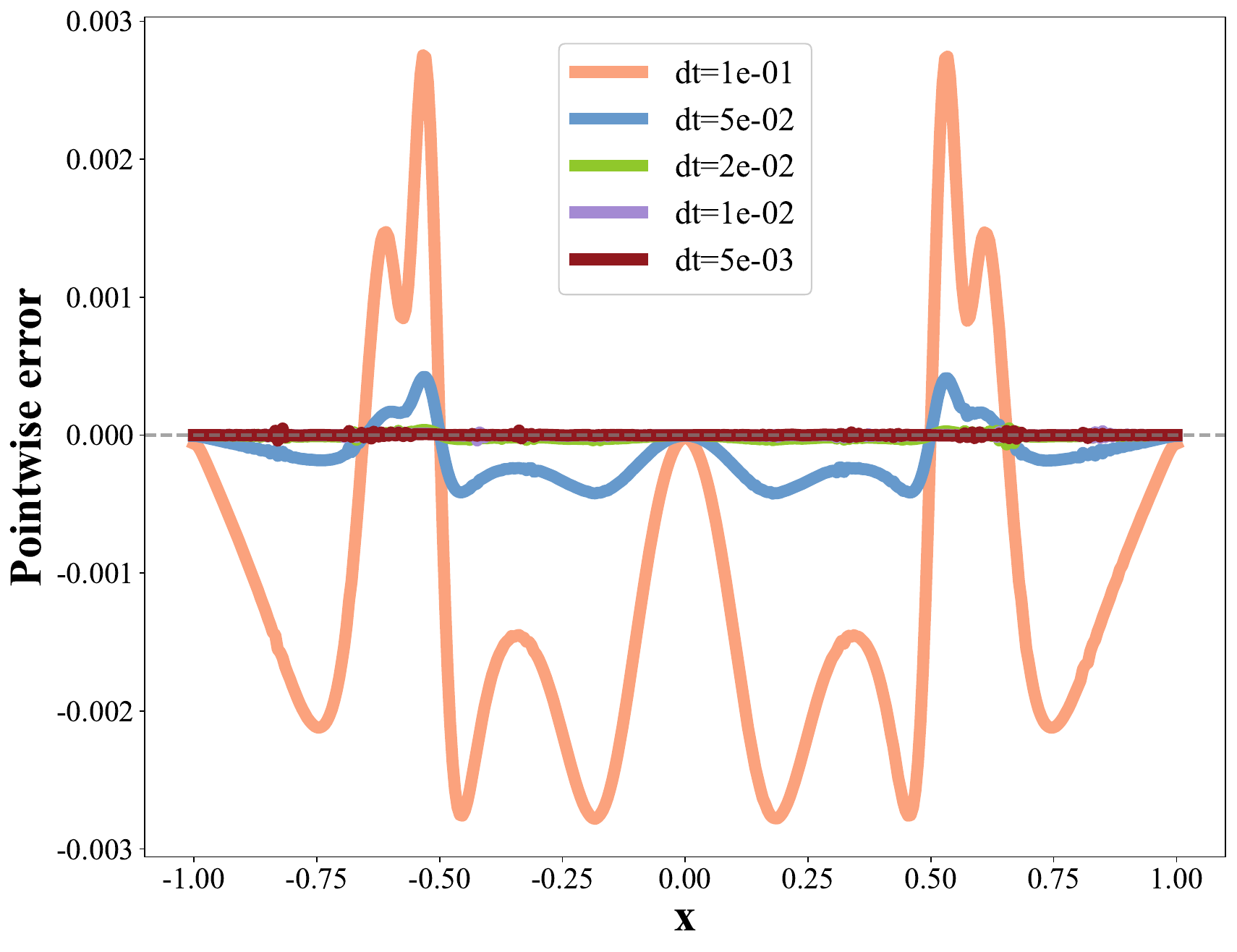}
        \par\vspace{2mm}
        (b) Pointwise errors at \(t=0.5\)
    \end{minipage}
    \caption{1D Allen--Cahn equation: (a) relative \(L^2\)-error at the final time versus time step size; (b) pointwise errors between the reference and predicted solutions at \(t=0.5\).}
    \label{fig:AC_results}
\end{figure}

Figure~\ref{fig:AC_results}(a) shows that the error decreases over most of the tested range, with a slope close to the third-order reference line for $\Delta t > 10^{-2}$. The smallest relative \(L^2\)-error is \(5.97\times10^{-6}\). When the time step is reduced below \(10^{-2}\), the error levels off, indicating that the spatial approximation error (the $\sqrt{d_u}\,\epsilon$ term in Theorem~\ref{thm:imex_main}) becomes dominant over the temporal discretization error ($C_3\Delta t^{p_t}$).

By comparison, the IMEX-PINN method yields a smallest relative \(L^2\)-error of \(1.51\times10^{-3}\), roughly three orders of magnitude larger. Figure~\ref{fig:AC_results}(b) shows that the error at \(t=0.5\) is already small across the whole domain and decreases further as \(\Delta t\) becomes smaller.

\subsection{1D Burgers' Equation}

We next consider the one-dimensional Burgers' equation, which provides a Type-II test case because the nonlinear term contains a spatial derivative. The equation is
\[
\partial_t u + u\partial_x u = \nu \partial_{xx}u,
\qquad \nu=\frac{1}{10\pi},
\qquad (x,t)\in[-1,1]\times(0,1].
\]
The initial condition is
\[
u(x,0)=-\sin(\pi x),
\]
and the boundary conditions are
\[
u(t,-1)=u(t,1)=0,\qquad
\partial_x u(t,-1)=\partial_x u(t,1).
\]

Here $\mathcal{L}(u) = \nu\partial_{xx}u$ and $\mathcal{G}(u) = -u\partial_x u$. Since $\mathcal{G}$ involves $\partial_x u$, this is a Type-II problem: at each IMEX-RK stage, the derivative $\partial_x U_j$ is computed from the RFM representation~\eqref{eq:3.3c} by automatic differentiation. The spatial discretization follows the domain decomposition of Section~\ref{sec:sub:static}, with first-order continuity conditions enforced across subdomain interfaces. The results are shown in Fig.~\ref{fig:burgers_results}.

\begin{figure}[htbp]
    \centering
    \begin{minipage}{0.49\textwidth}
        \centering
        \includegraphics[width=\textwidth]{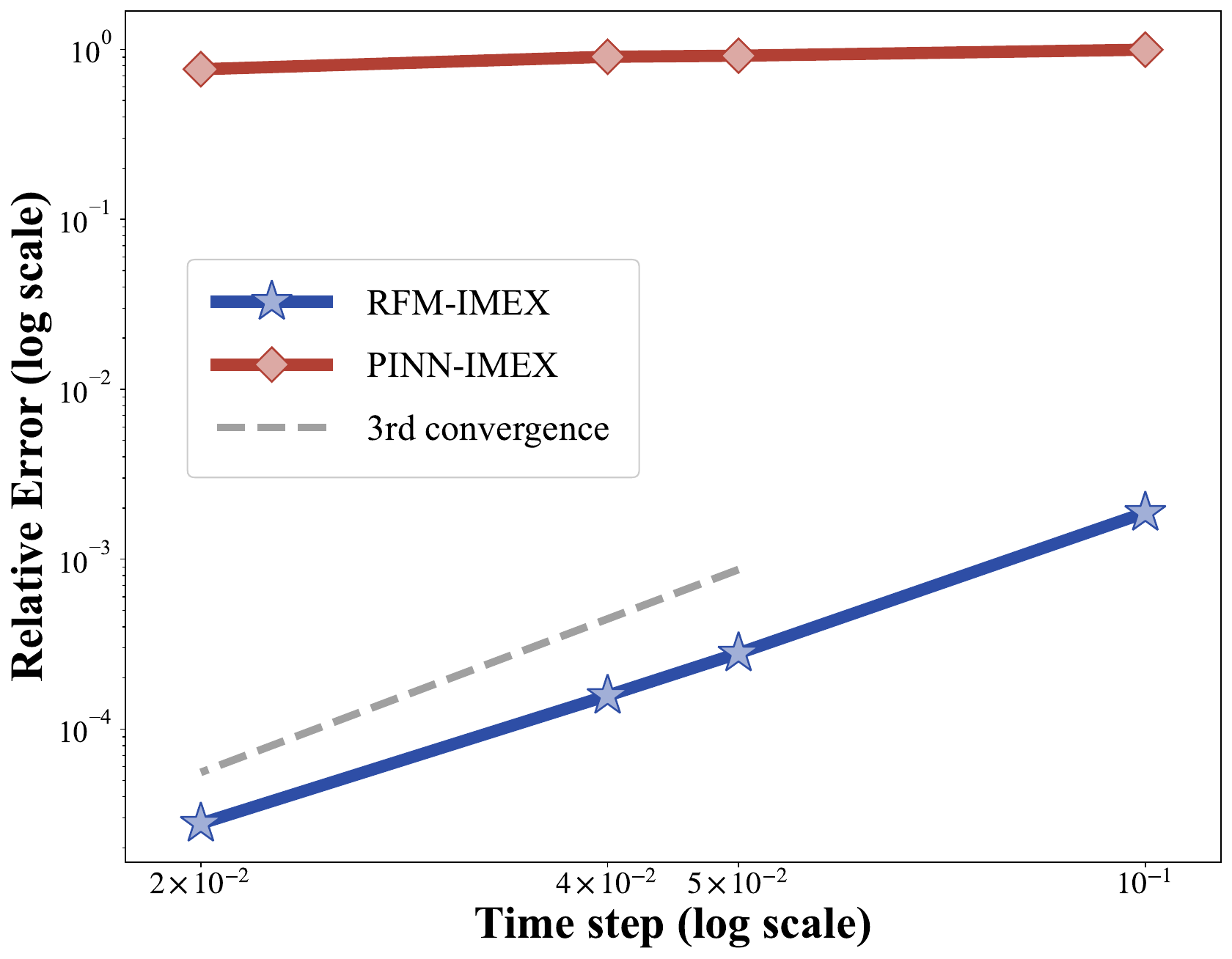}
        \par\vspace{2mm}
        (a) Relative \(L^2\)-error versus time step size
    \end{minipage}\hfill
    \begin{minipage}{0.49\textwidth}
        \centering
        \includegraphics[width=\textwidth]{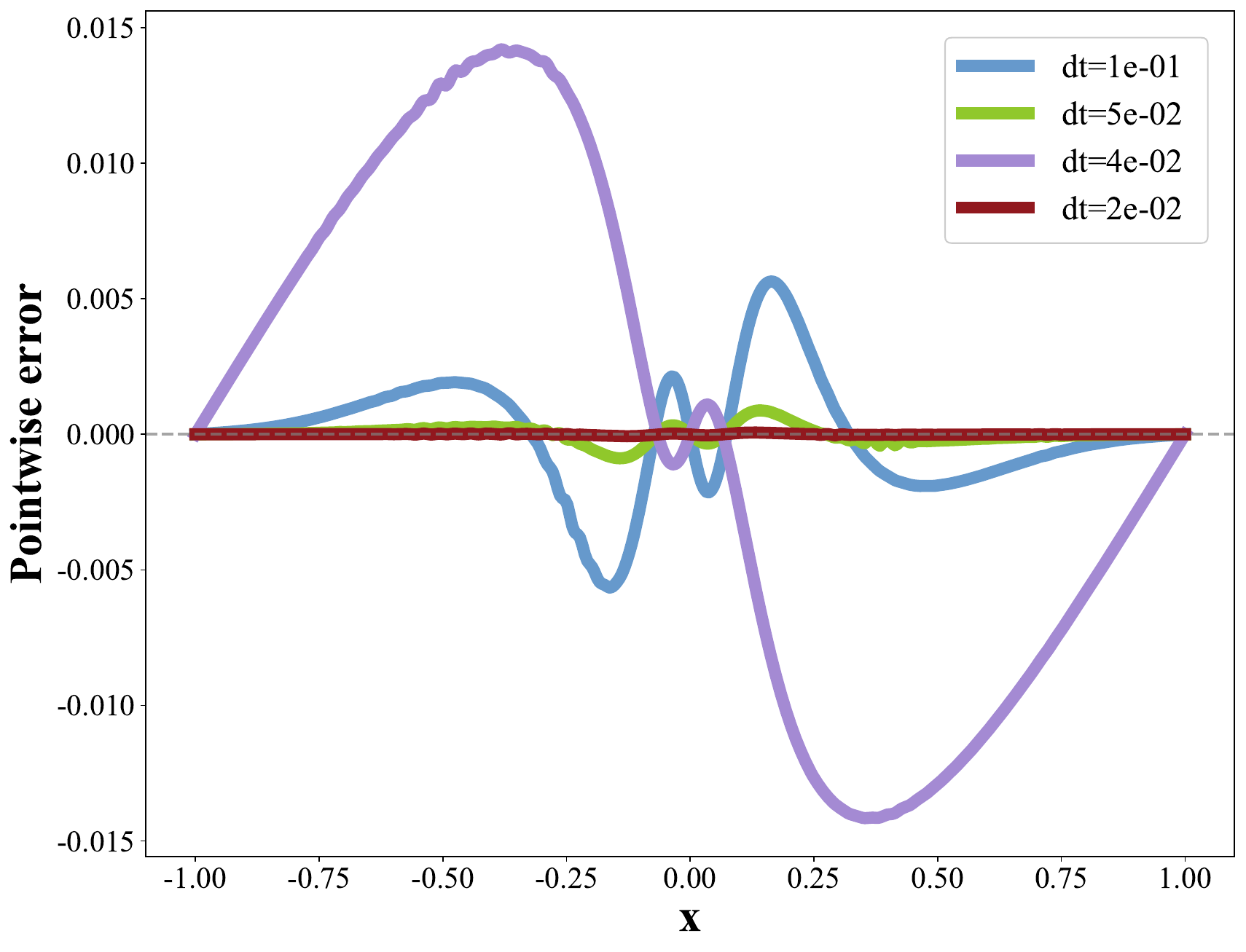}
        \par\vspace{2mm}
        (b) Pointwise errors at \(t=0.5\)
    \end{minipage}
    \caption{1D Burgers' equation: (a) relative \(L^2\)-error at the final time versus time step size; (b) pointwise errors between the reference and predicted solutions at \(t=0.5\).}
    \label{fig:burgers_results}
\end{figure}

Figure~\ref{fig:burgers_results}(a) shows an overall decrease of the error as the time step is refined. The smallest relative \(L^2\)-error is \(2.3\times10^{-5}\), attained at \(\Delta t=2\times10^{-2}\). The convergence curve is not monotone: the non-monotonicity is attributed to the variance of the random feature sampling, which introduces an $O(\delta)$ stochastic perturbation at each time level (cf.\ the first term in~\eqref{eq:imex_main_error_checked}). Despite these fluctuations, the overall trend is consistent with the third-order reference slope. By contrast, the IMEX-PINN error grows rapidly as time advances, preventing the numerical solution from tracking the reference at later times. Figure~\ref{fig:burgers_results}(b) shows that the error at \(t=0.5\) remains small for most choices of \(\Delta t\), which indicates good accuracy. The relatively larger error for \(\Delta t=4\times10^{-2}\) is explained by the fact that \(t=0.5\) is not an integer multiple of the time step, and the plotted curve therefore corresponds to the solution at the preceding time level.

\subsection{1D Korteweg--De Vries Equation}

The next example is the one-dimensional Korteweg--De Vries (KdV) equation, a dispersive Type-II problem with a third-order spatial operator. We consider
\[
\partial_t u + u \partial_x u + \alpha^2 \partial_{xxx} u = 0,
\qquad \alpha=0.022,
\qquad (x,t)\in[-1,1]\times(0,1].
\]
The initial condition is
\[
u(x,0)=\cos(\pi x),
\]
and periodic boundary conditions are imposed:
\[
u(-1,t)=u(1,t),\qquad
\partial_x u(-1,t)=\partial_x u(1,t),\qquad
\partial_{xx} u(-1,t)=\partial_{xx} u(1,t).
\]

The splitting is $\mathcal{L}(u) = \alpha^2\partial_{xxx}u$ and $\mathcal{G}(u) = -u\partial_x u$, again a Type-II problem. Since the linear part involves a third-order operator, we impose second-order continuity conditions at subdomain interfaces to ensure sufficient smoothness of the trial functions. The final-time relative \(L^2\)-error for a range of time step sizes is shown in Fig.~\ref{fig:kdv_results}.

\begin{figure}[htbp]
    \centering
    \begin{minipage}{0.49\textwidth}
        \centering
        \includegraphics[width=\textwidth]{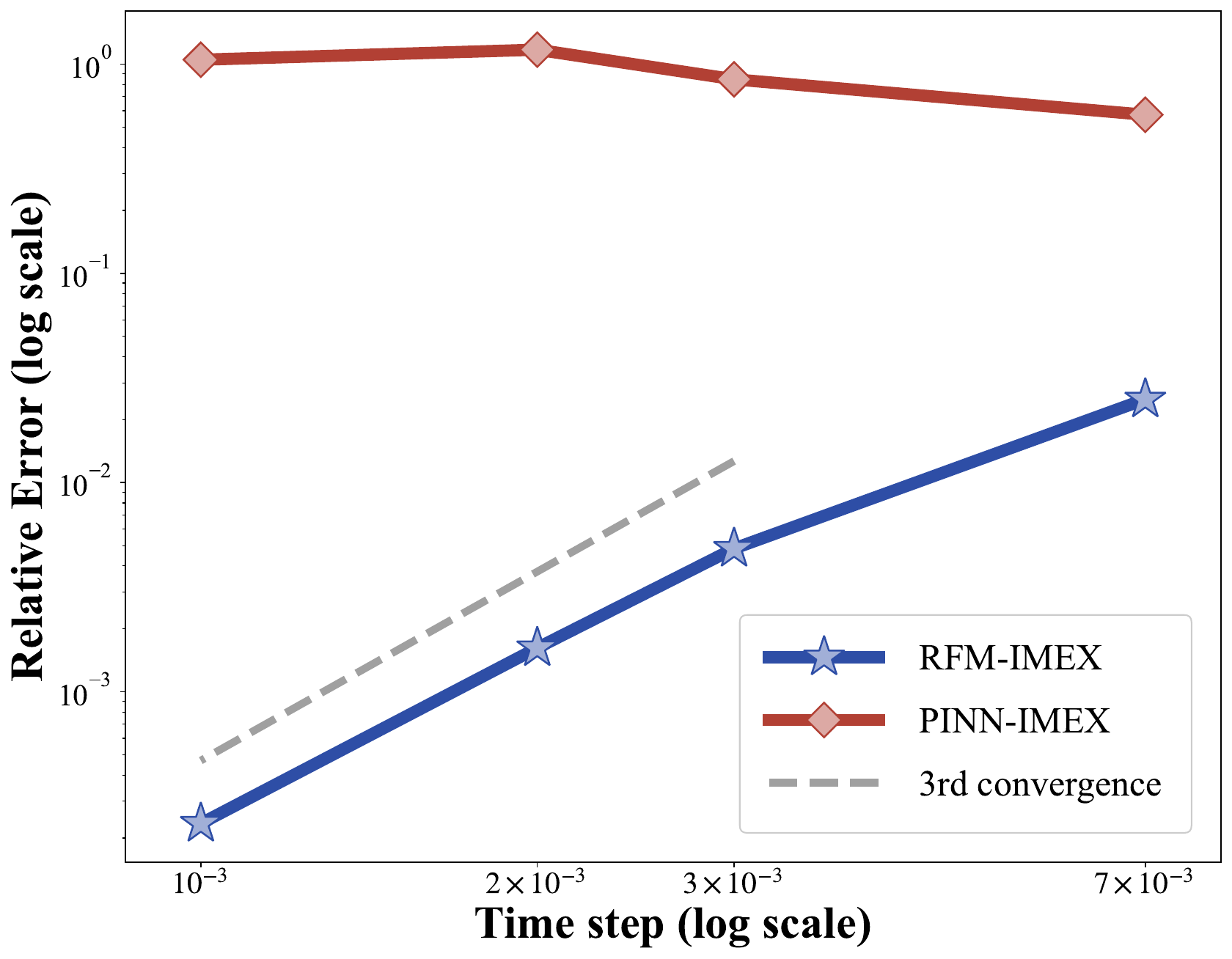}
        \par\vspace{2mm}
        (a) Relative \(L^2\)-error versus time step size
    \end{minipage}\hfill
    \begin{minipage}{0.49\textwidth}
        \centering
        \includegraphics[width=\textwidth]{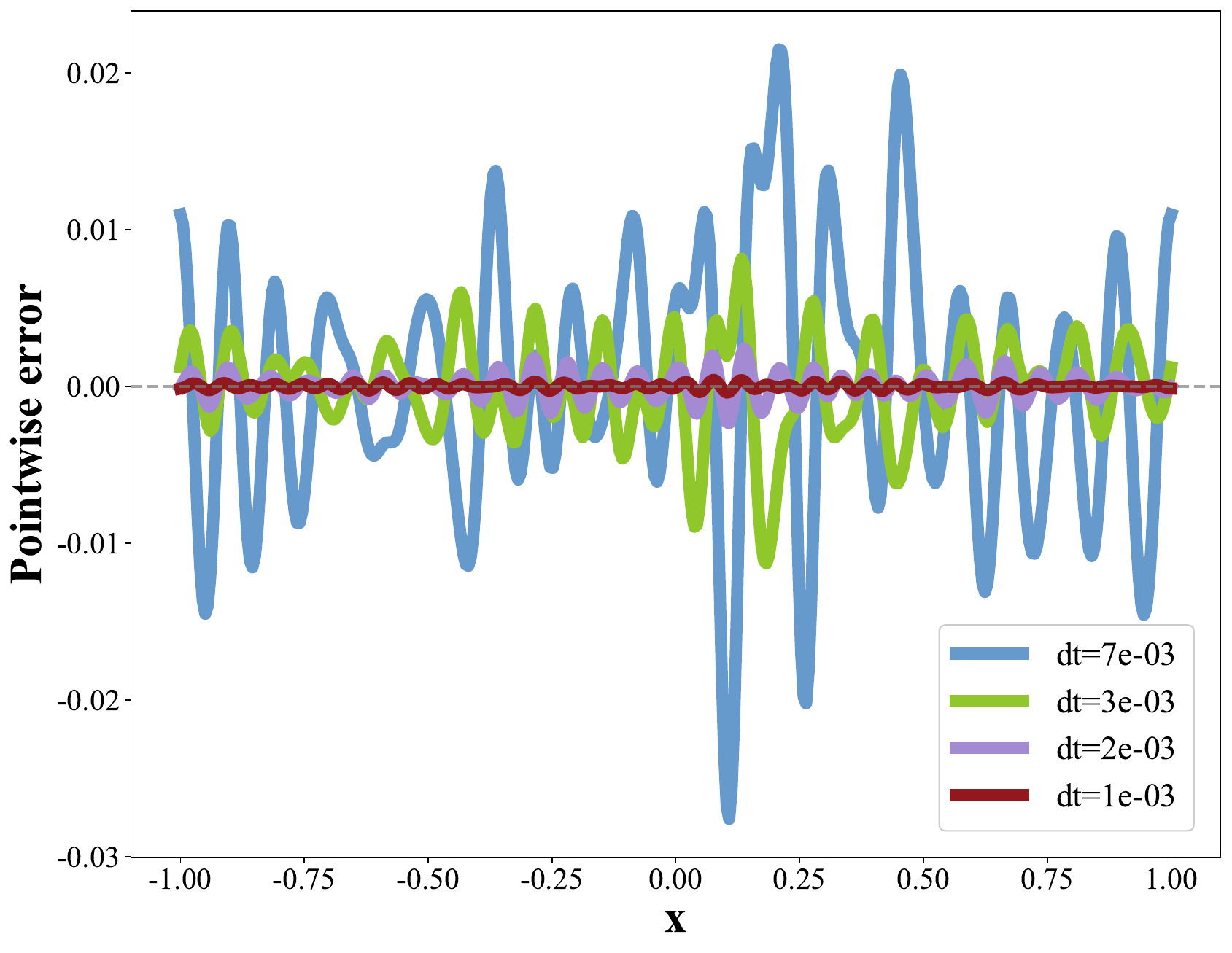}
        \par\vspace{2mm}
        (b) Pointwise errors at \(t=0.5\)
    \end{minipage}
    \caption{1D Korteweg--De Vries equation: (a) relative \(L^2\)-error at the final time versus time step size; (b) pointwise errors between the reference and predicted solutions at \(t=0.5\).}
    \label{fig:kdv_results}
\end{figure}

Figure~\ref{fig:kdv_results}(a) shows an overall decrease of the error with decreasing step size. The smallest relative \(L^2\)-error is \(2.43\times10^{-4}\), larger than the Allen--Cahn and Burgers results. This is consistent with the higher spatial regularity requirements imposed by the third-order operator. Over most of the tested range, the convergence rate agrees with the third-order reference slope. As in the previous tests, the IMEX-PINN method yields substantially larger errors. Figure~\ref{fig:kdv_results}(b) shows the corresponding pointwise errors of reference and predicted solutions at \(t=0.5\).

\subsection{1D Cahn--Hilliard Equation}

We next consider the one-dimensional Cahn--Hilliard equation, a fourth-order nonlinear problem that provides the most demanding Type-II test.

We follow the formulation in~\cite{ijcai2024p573}:
\[
\partial_t u-\partial_{xx}\big(\gamma_1(u^3-u)-\gamma_2\partial_{xx}u\big)=0,
\qquad (x,t)\in[-1,1]\times(0,1].
\]
The parameters are chosen as \(\gamma_1=0.01\) and \(\gamma_2=10^{-6}\). Periodic boundary conditions are imposed up to the third derivative:
\begin{align*}
u(-1,t)&=u(1,t), &\quad \partial_x u(-1,t)&=\partial_x u(1,t),\\
\partial_{xx}u(-1,t)&=\partial_{xx}u(1,t), &\quad \partial_{xxx}u(-1,t)&=\partial_{xxx}u(1,t).
\end{align*}
The initial condition is
\[
u(x,0)=\sum_{i=1}^{2}A_i\sin(k_i x+\phi_i),
\]
where the wave numbers are given by \(k_i=2\pi n_i\), with \(n_i\in\{1,2,\dots,8\}\) for \(i=1,2\). The amplitudes \(A_i\in[0,1]\) and phases \(\phi_i\in(0,2\pi)\) are sampled independently from a uniform distribution. A single fixed realization is used for all reported experiments.

The splitting assigns $\mathcal{L}(u) = -\gamma_2\partial_{xxxx}u$ (implicit) and $\mathcal{G}(u) = \partial_{xx}[\gamma_1(u^3 - u)]$ (explicit). The explicit term involves second derivatives of a nonlinear function of $u$, making this a Type-II problem with both strong nonlinearity and high spatial order. Following Section~\ref{sec:sub:static}, we impose third-order continuity conditions at subdomain interfaces. The relative \(L^2\)-error at the final time is shown in Fig.~\ref{fig:CH_results}.

\begin{figure}[htbp]
    \centering
    \begin{minipage}{0.49\textwidth}
        \centering
        \includegraphics[width=\textwidth]{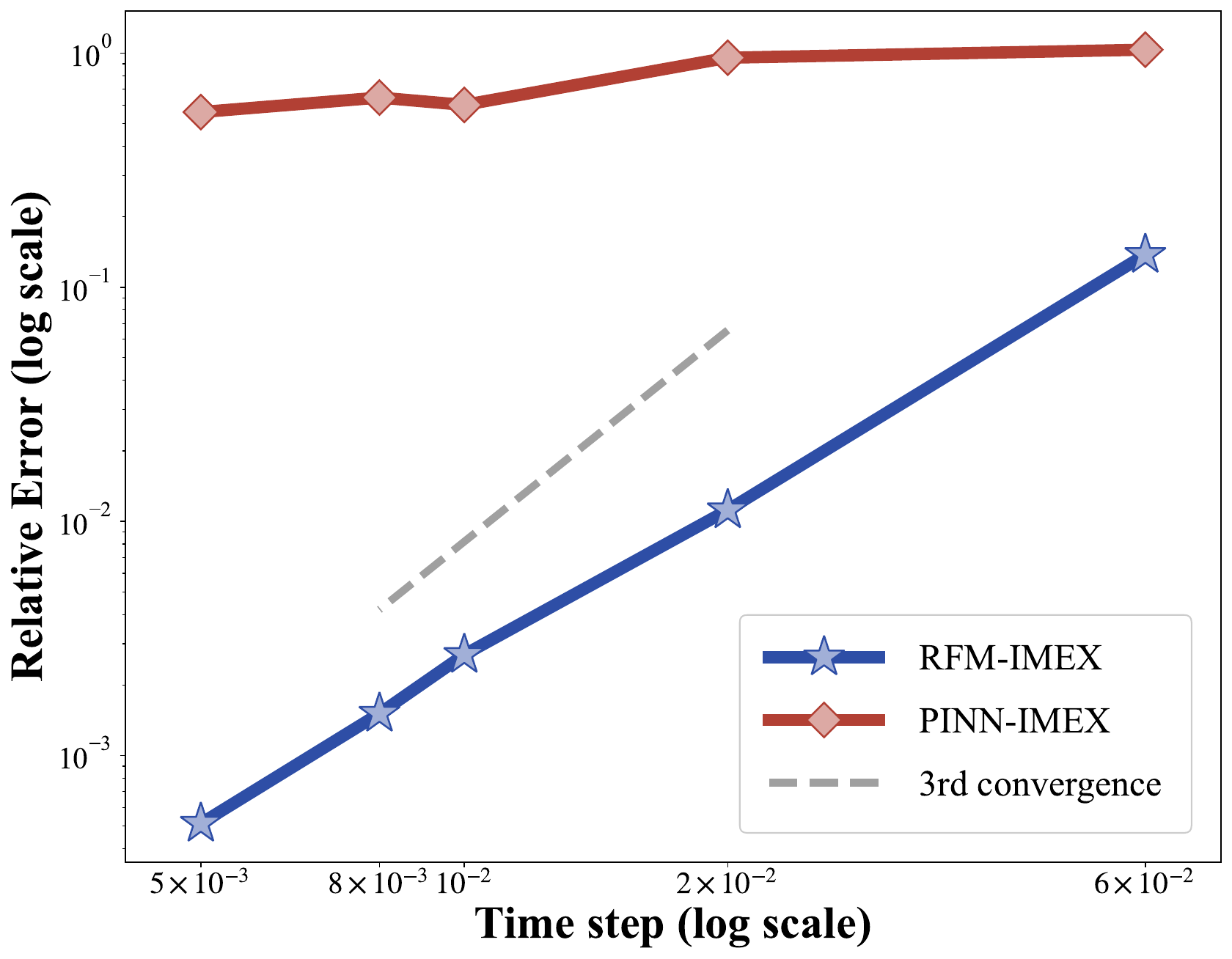}
        \par\vspace{2mm}
        (a) Relative \(L^2\)-error versus time step size
    \end{minipage}\hfill
    \begin{minipage}{0.49\textwidth}
        \centering
        \includegraphics[width=\textwidth]{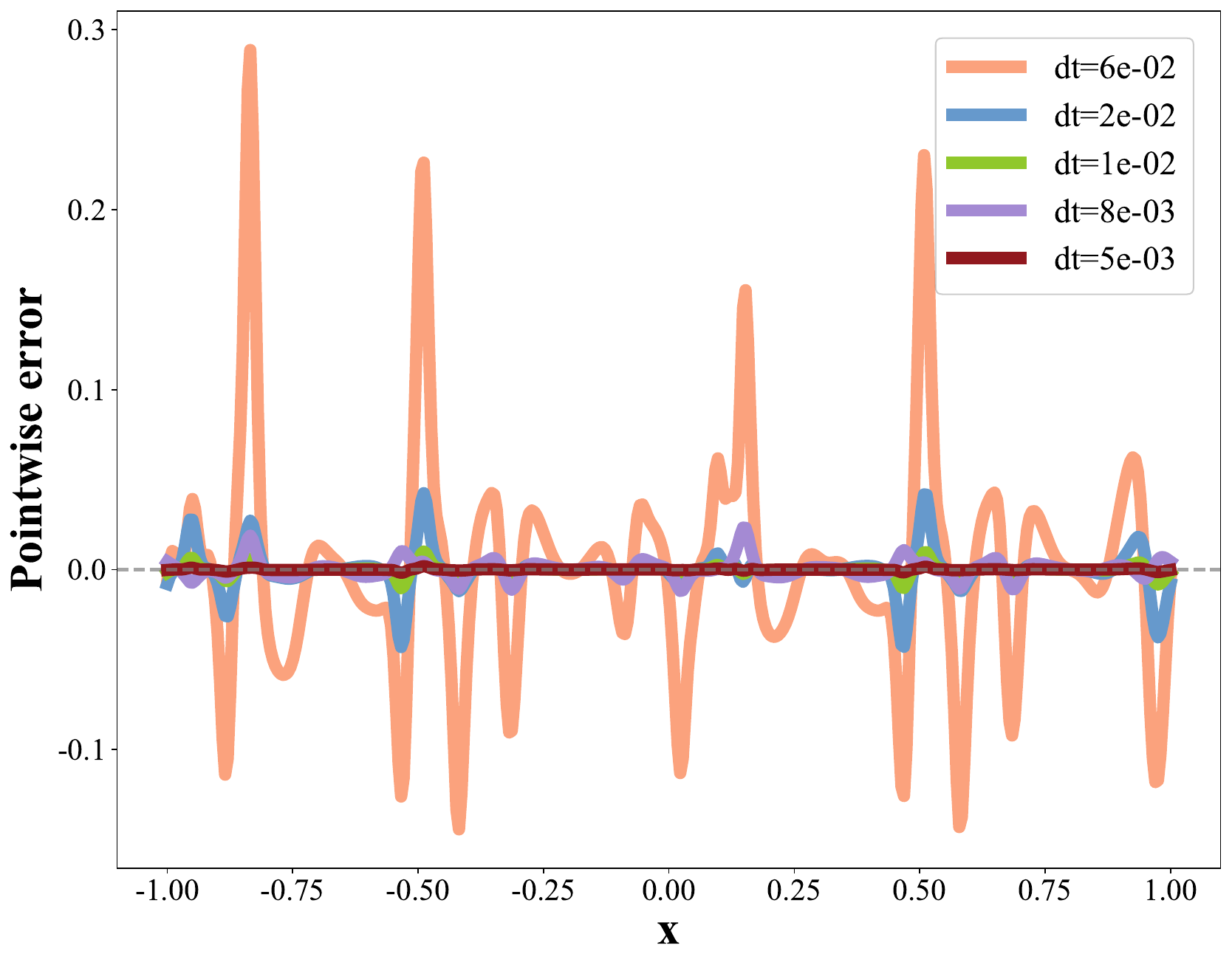}
        \par\vspace{2mm}
        (b) Pointwise errors at \(t=0.5\)
    \end{minipage}
    \caption{1D Cahn--Hilliard equation: (a) relative \(L^2\)-error at the final time versus time step size; (b) pointwise errors between the reference and predicted solutions at \(t=0.5\).}
    \label{fig:CH_results}
\end{figure}

Figure~\ref{fig:CH_results}(a) shows that the smallest relative \(L^2\)-error is \(5.11\times10^{-4}\), the largest among the four one-dimensional tests. This is expected: the fourth-order spatial operator amplifies both the RFM approximation error and the sensitivity of the least-squares system to the random feature sampling. As in the Burgers and KdV tests, the convergence curve exhibits mild fluctuations due to random feature variance, though the overall trend is consistent with third-order convergence. The IMEX-RFM method again outperforms the IMEX-PINN variant. Figure~\ref{fig:kdv_results}(b) shows the corresponding pointwise errors of reference and predicted solutions at \(t=0.5\).

\subsection{2D Allen--Cahn Equation}

For the final example, we consider the two-dimensional Allen--Cahn equation to test the method in higher spatial dimensions. The equation is
\[
\partial_t u = \varepsilon^2 \Delta u + \left(u-u^3\right),
\qquad (x,y)\in\Omega,\ t\in(0,T].
\]
The computational domain, terminal time, and parameter are
\[
\Omega=[-1,1]\times[-1,1],
\qquad \varepsilon=10^{-2},
\qquad T=1 .
\]
The initial condition is
\[
u(x,y,0)=0.05\sin(\pi x)\sin(\pi y),
\]
and periodic boundary conditions are imposed in both spatial directions:
\[
u(-1,y,t)=u(1,y,t),\quad u_x(-1,y,t)=u_x(1,y,t),\qquad y\in(-1,1),
\]
\[
u(x,-1,t)=u(x,1,t),\quad u_y(x,-1,t)=u_y(x,1,t),\qquad x\in(-1,1).
\]

As in the one-dimensional case, the nonlinear term is pointwise in $u$ (Type-I). First-order continuity conditions are enforced at the subdomain interfaces. A comparison with the IMEX-PINN method is presented in Figure~\ref{fig:2D_AC_Convergence}. The minimum relative \(L^2\)-error attained by IMEX-RFM is \(1.24\times10^{-3}\).
\begin{figure}
    \centering
    \includegraphics[width=0.5\linewidth]{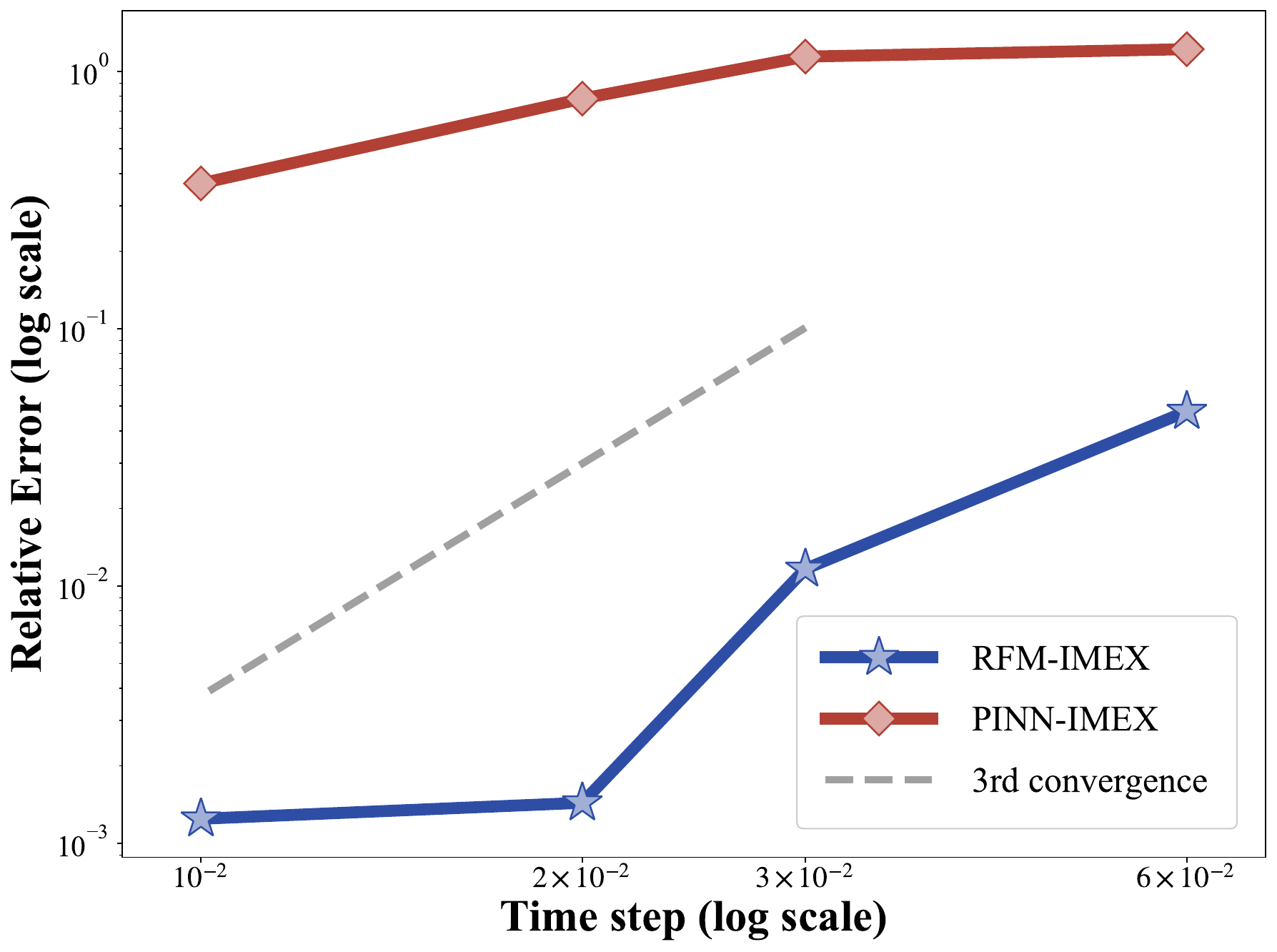}
    \caption{Comparison of the relative \(L^2\)-errors of IMEX-RFM and IMEX-PINN for the 2D Allen--Cahn equation.}
    \label{fig:2D_AC_Convergence}
\end{figure}

Although not all individual step sizes show clear third-order convergence, a comparison of the cases \(\Delta t=6\times10^{-2}\) and \(\Delta t=3\times10^{-2}\) (a factor-of-two refinement yielding roughly an eight-fold error reduction) indicates that the overall temporal convergence behavior is consistent with third-order accuracy. The local discrepancy at other step sizes is attributed to the increased variance of the random feature sampling in two dimensions, where the number of random features per subdomain ($J_n = 200$) is smaller than in the one-dimensional tests. As in the 1D Allen--Cahn test, the error does not continue to decrease for very small $\Delta t$, confirming that the spatial error floor is reached.

We now summarize the results across all five test cases. The method achieves relative $L^2$-errors ranging from $5.97 \times 10^{-6}$ (1D Allen--Cahn) to $5.11 \times 10^{-4}$ (1D Cahn--Hilliard) in one dimension and $1.24 \times 10^{-3}$ in two dimensions. In all cases, the convergence trends are consistent with the third-order IMEX-RK(4,3) scheme. Among the one-dimensional tests, the Cahn--Hilliard equation yields the largest errors, which is expected given the fourth-order spatial operator and the sensitivity of the phase-field dynamics to the spatial approximation. The non-monotone convergence curves in the Burgers and Cahn--Hilliard tests are attributed to the stochastic nature of the random feature initialization (cf.\ the coefficient perturbation term in Theorem~\ref{thm:imex_main}). The running-time comparison in Appendix Table~\ref{tab:time_compare_all} confirms that IMEX-RFM is consistently faster than IMEX-PINN across all test cases.

\section{Conclusion}
\label{sec:conclusion}

We have presented a discrete-time RFM framework for nonlinear time-dependent PDEs combined with an implicit-explicit Runge--Kutta time discretization. The method advances the solution sequentially in time and preserves stage-wise linearity in the RFM coefficients, so each stage reduces to a linear least-squares problem. We derived a global error bound (Theorem~\ref{thm:imex_main}) for the fully discrete method that separates the contributions of coefficient perturbation, stage-wise RFM approximation, and IMEX temporal discretization. Numerical experiments on five test problems (Allen--Cahn, Burgers, KdV, and Cahn--Hilliard in one dimension, Allen--Cahn in two dimensions) confirm third-order temporal convergence and show that the method achieves higher accuracy than an IMEX-PINN variant at a fraction of the computational cost.

We note several limitations of the current work.

\begin{enumerate}
\item \textit{Conditional stability.} The fully discrete method inherits the stability region of the underlying IMEX-RK(4,3) tableau. The present analysis does not provide an unconditional stability guarantee for the RFM discretization, so strongly stiff problems may still require small time step sizes.

\item \textit{Random feature variance.} The spatial approximation depends on the sampled random features, which are fixed after initialization. Different random seeds can lead to noticeably different errors, consistent with the non-monotone convergence curves observed in the Burgers and Cahn--Hilliard tests.

\item \textit{Absence of structure preservation.} The current formulation does not enforce problem-specific invariants such as energy dissipation (Allen--Cahn, Cahn--Hilliard) or mass conservation.

\item \textit{Limited higher-dimensional testing.} Although the domain decomposition and least-squares assembly extend directly to higher dimensions, the growth of the design matrix with spatial dimension has not been systematically studied.
\end{enumerate}

These limitations suggest several directions for future work: (i) systematic experiments in two and three dimensions to assess how accuracy and cost scale with spatial dimension; (ii) adaptive strategies for selecting $M$, $J_n$, $Q$, and $\tau_s$ to reduce manual parameter tuning; (iii) structure-preserving extensions of the least-squares formulation to enforce physical invariants; and (iv) variance analysis over multiple random seeds, possibly combined with ensemble averaging or quasi-random sampling, to quantify and reduce the sensitivity to random initialization.

\section{Appendix}

This appendix collects supplementary material for the numerical experiments in Section~\ref{sec:numer}. Section~6.1 records the coefficients of the IMEX-RK(4,3) scheme. Section~6.2 summarizes the parameter settings for all test equations. Section~6.3 reports the total running times of IMEX-RFM and IMEX-PINN. Section~6.4 presents heatmap comparisons between the reference and predicted solutions, showing the spatial distribution of the pointwise errors.

\subsection{Coefficients of the IMEX-RK(4,3) Scheme}

The implicit and explicit Butcher tableaux of the four-stage IMEX-RK(4,3) scheme used in all experiments are listed in Table~\ref{tab:imex-rk-coeff}; see~\cite{ascher1995implicit} for a derivation.

\begin{table}[htbp]
\centering
\begin{minipage}{0.4\textwidth}
\centering
\begin{tabular}{c|cccc}
\multicolumn{5}{c}{Implicit coefficients $A=(a_{ij})$} \\
\hline
$\frac12$ & $\frac12$ & 0 & 0 & 0 \\
$\frac23$ & $\frac16$ & $\frac12$ & 0 & 0 \\
$\frac12$ & $-\frac12$ & $\frac12$ & $\frac12$ & 0 \\
1 & $\frac32$ & $-\frac32$ & $\frac12$ & $\frac12$ \\
\hline
 & $\frac32$ & $-\frac32$ & $\frac12$ & $\frac12$
\end{tabular}
\end{minipage}
\hspace{0.01\textwidth}
\begin{minipage}{0.4\textwidth}
\centering
\begin{tabular}{c|cccc}
\multicolumn{5}{c}{Explicit coefficients $\hat A=(\hat a_{ij})$} \\
\hline
$\frac12$ & $\frac12$ & 0 & 0 & 0 \\
$\frac23$ & $\frac{11}{18}$ & $\frac1{18}$ & 0 & 0 \\
$\frac12$ & $\frac56$ & $-\frac56$ & $\frac12$ & 0 \\
1 & $\frac14$ & $\frac74$ & $\frac34$ & $-\frac74$ \\
\hline
 & $\frac14$ & $\frac74$ & $\frac34$ & $-\frac74$
\end{tabular}
\end{minipage}
\caption{Butcher tableaux of the four-stage IMEX-RK(4,3) scheme (third-order in time) used in this work~\cite{ascher1995implicit}.}
\label{tab:imex-rk-coeff}
\end{table}

\subsection{Parameter Settings}

Table~\ref{tab:parameter_settings_all} lists the parameter settings for all test equations.All singular-value cutoff \(\tau_s\) are set to $1\mathrm{e}{-16}$ in the computation. For the 2D Allen--Cahn equation, the values of \(R_m\), \(M\), \(J_n\), and \(Q\) apply independently to each spatial direction.

\begin{table}[htbp]
\centering

\setlength{\tabcolsep}{6pt} 
\begin{tabularx}{\linewidth}{l c c c c >{\centering\arraybackslash}X}
\toprule
\textbf{Equation} & $R_m$ & $M$ & $J_n$ & $Q$ & $\Delta t$ set \\
\midrule
1D Allen--Cahn
& 20 & 8 & 500 & 100 & $[1\mathrm{e}{-1},\,  4\mathrm{e}{-2},\, 2\mathrm{e}{-2},\, 1\mathrm{e}{-2},\, 5\mathrm{e}{-3},\, 1\mathrm{e}{-3}]$ \\

1D Burgers'
& 20 & 2 & 400 & 100 & $[1\mathrm{e}{-1},\, 5\mathrm{e}{-2},\, 4\mathrm{e}{-2},\, 2\mathrm{e}{-2},\, 1\mathrm{e}{-3}]$ \\

1D Korteweg--De Vries
& 16 & 8 & 400 & 100 & $[7\mathrm{e}{-3},\,  3\mathrm{e}{-3},\, 2\mathrm{e}{-3},\, 1\mathrm{e}{-3},\, 8\mathrm{e}{-4}]$ \\

1D Cahn--Hilliard
& 14 & 10 & 500 & 100 & $[6\mathrm{e}{-2},\, 2\mathrm{e}{-2},\, 1\mathrm{e}{-2},\, 8\mathrm{e}{-3},\, 5\mathrm{e}{-3},\, 1\mathrm{e}{-3}]$ \\

2D Allen--Cahn
& 1 & 2 & 200 & 25 & $[6\mathrm{e}{-2},\, 3\mathrm{e}{-2},\, 2\mathrm{e}{-2},\, 1\mathrm{e}{-2},\, 1\mathrm{e}{-3}]$ \\
\bottomrule
\end{tabularx}
\caption{Parameter settings for the test equations.}
\label{tab:parameter_settings_all}
\end{table}

\subsection{Total Running Time Comparisons}

Table~\ref{tab:time_compare_all} reports the total wall-clock times (in seconds) of IMEX-RFM and IMEX-PINN for all test equations. In every case, IMEX-RFM is faster, with speedup factors ranging from approximately 2.8$\times$ (1D Allen--Cahn) to 15.1$\times$ (1D Burgers).

\begin{table}[htbp]
\centering
\renewcommand{\arraystretch}{1.2}
\setlength{\tabcolsep}{10pt}
\begin{tabular}{lccc}
\toprule
\textbf{Equation} & \textbf{IMEX-RFM (s)} & \textbf{IMEX-PINN (s)} & \textbf{Speedup} \\
\midrule
1D Allen--Cahn            & 374.5  & 1050.8 & $2.8\times$ \\
1D Burgers'               & 46.0   & 694.4  & $15.1\times$ \\
1D Korteweg--De Vries     & 738.0  & 3905.9 & $5.3\times$ \\
1D Cahn--Hilliard         & 908.6  & 4937.8 & $5.4\times$ \\
2D Allen--Cahn            & 313.7  & 1335.7 & $4.3\times$ \\
\bottomrule
\end{tabular}
\caption{Total running times (in seconds) and speedup factors of IMEX-RFM over IMEX-PINN.}
\label{tab:time_compare_all}
\end{table}

\subsection{Heatmap Comparisons}

Figures~8--17 present space-time heatmap comparisons for all test equations at representative time step sizes. Each row shows the reference solution (left), the predicted solution (center), and the absolute pointwise error (right). These plots complement the convergence curves in Section~\ref{sec:numer} by revealing the spatial distribution of the errors.

\begin{figure}[htbp]
    \centering
    \includegraphics[width=1.0\textwidth]{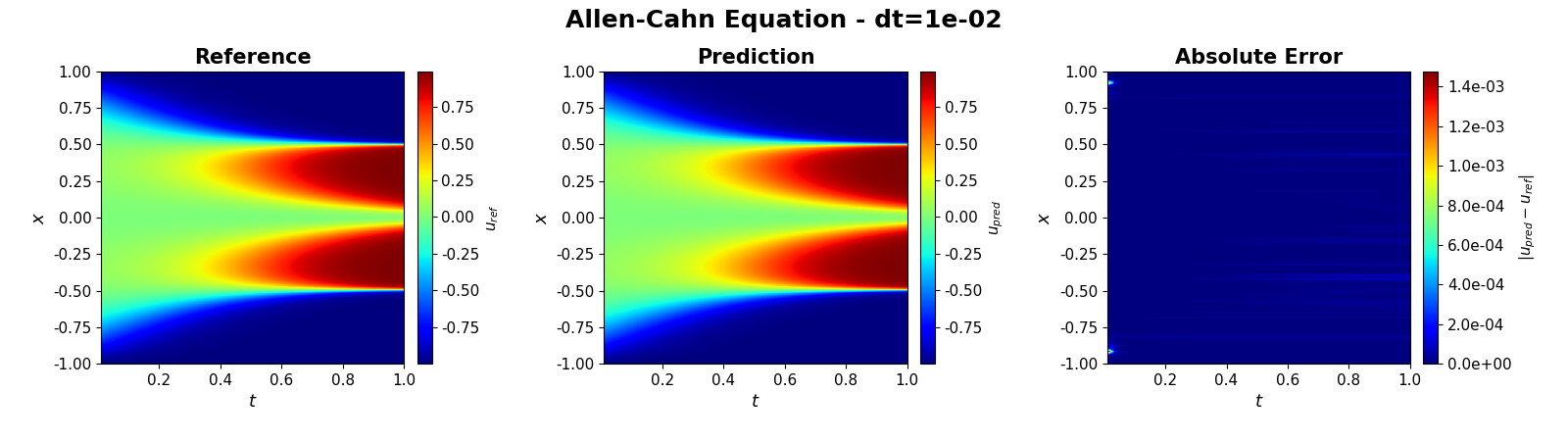}
    \caption{1D Allen--Cahn equation with $\Delta t=10^{-2}$: reference solution (left), predicted solution (center), and absolute error (right).}
\end{figure}

\begin{figure}[htbp]
    \centering
    \includegraphics[width=1.0\textwidth]{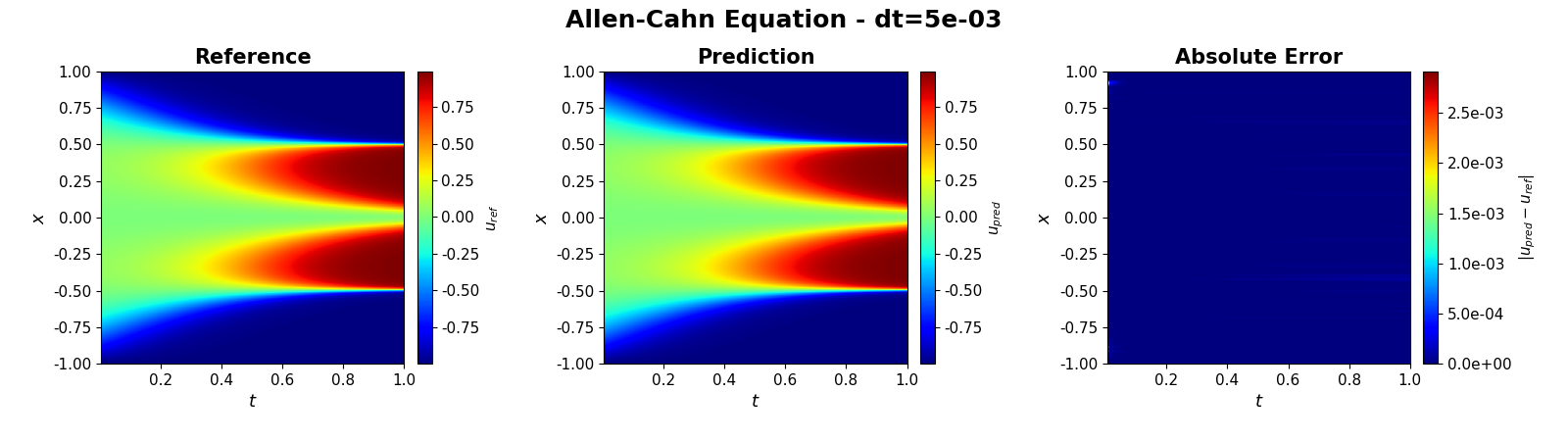}
    \caption{1D Allen--Cahn equation with $\Delta t=5\times10^{-3}$: reference solution (left), predicted solution (center), and absolute error (right).}
\end{figure}

\begin{figure}[htbp]
    \centering
    \includegraphics[width=1.0\textwidth]{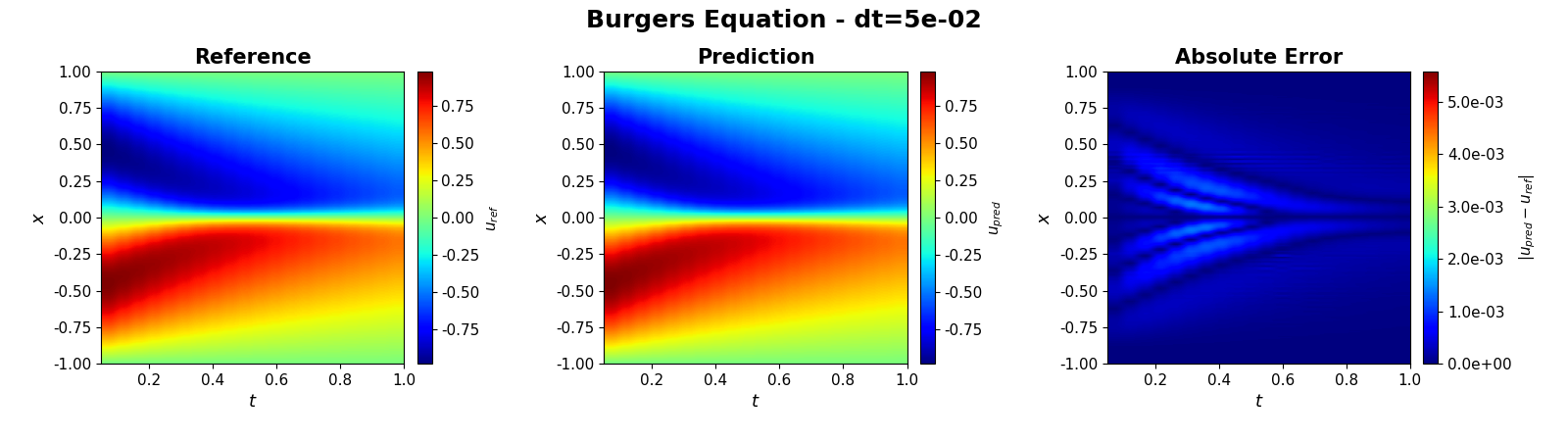}
    \caption{1D Burgers' equation with $\Delta t=5\times10^{-2}$: reference solution (left), predicted solution (center), and absolute error (right).}
\end{figure}

\begin{figure}[htbp]
    \centering
    \includegraphics[width=1.0\textwidth]{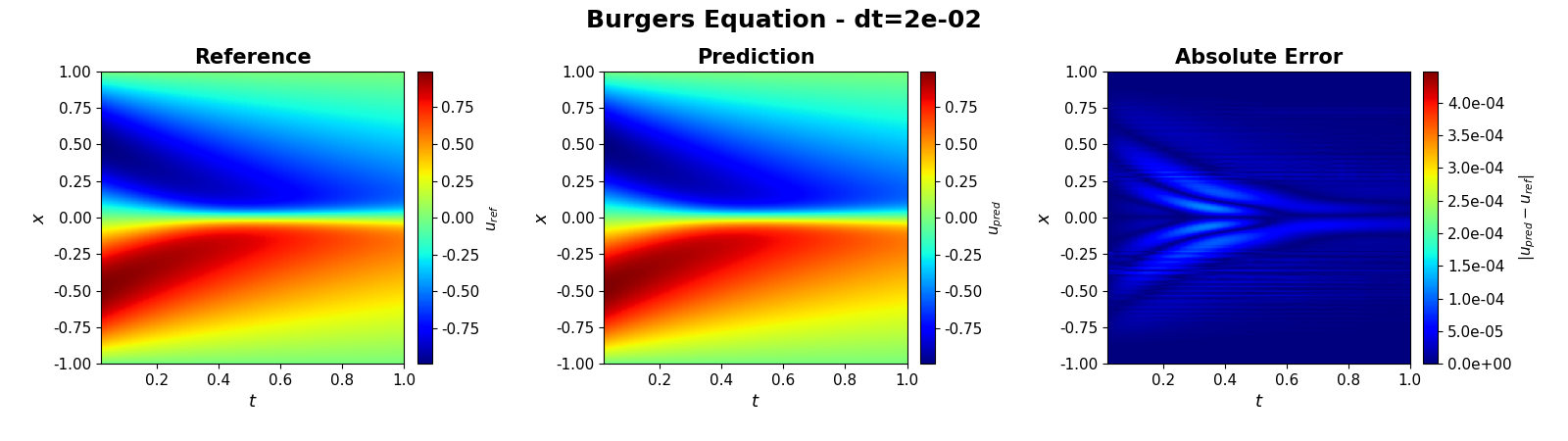}
    \caption{1D Burgers' equation with $\Delta t=2\times10^{-2}$: reference solution (left), predicted solution (center), and absolute error (right).}
\end{figure}

\begin{figure}[htbp]
    \centering
    \includegraphics[width=1.0\textwidth]{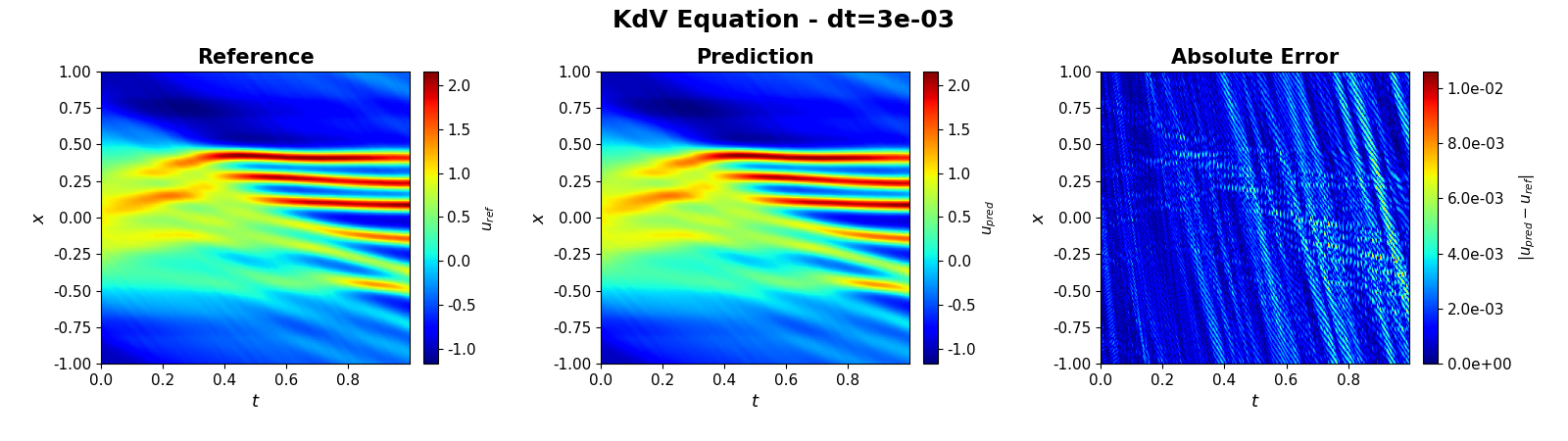}
    \caption{1D KdV equation with $\Delta t=3\times10^{-3}$: reference solution (left), predicted solution (center), and absolute error (right).}
\end{figure}

\begin{figure}[htbp]
    \centering
    \includegraphics[width=1.0\textwidth]{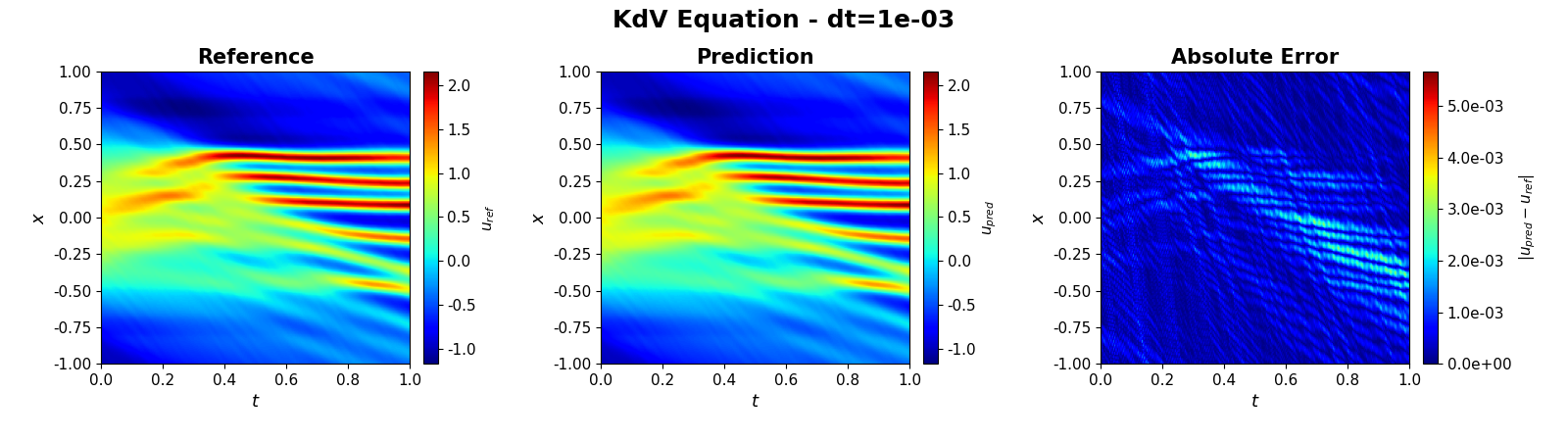}
    \caption{1D KdV equation with $\Delta t=10^{-3}$: reference solution (left), predicted solution (center), and absolute error (right).}
\end{figure}

\begin{figure}[htbp]
    \centering
    \includegraphics[width=1.0\textwidth]{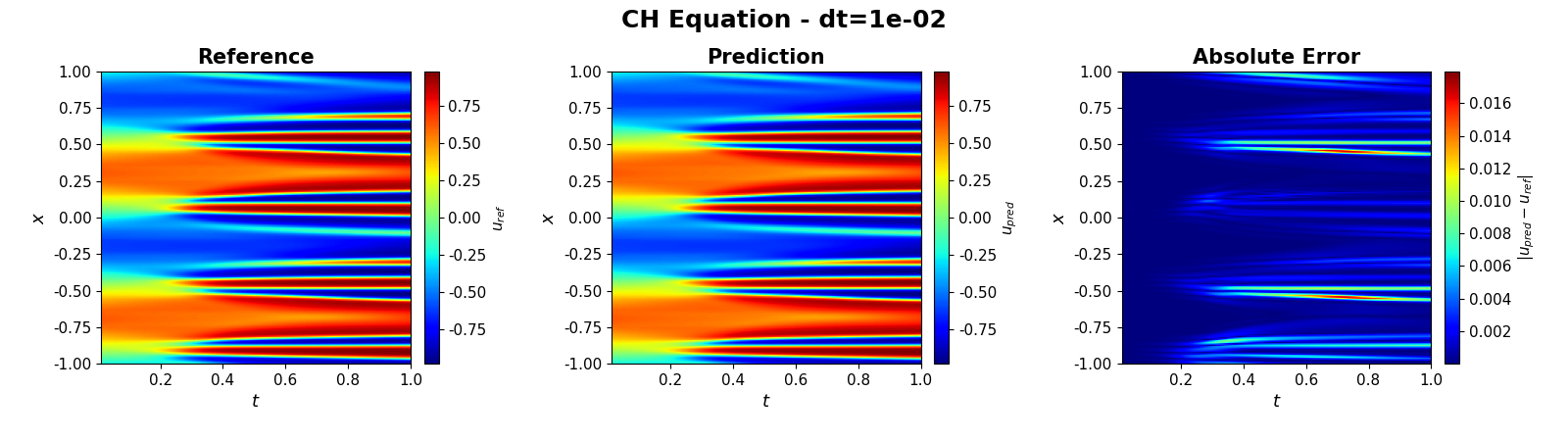}
    \caption{1D Cahn--Hilliard equation with $\Delta t=10^{-2}$: reference solution (left), predicted solution (center), and absolute error (right).}
\end{figure}

\begin{figure}[htbp]
    \centering
    \includegraphics[width=1.0\textwidth]{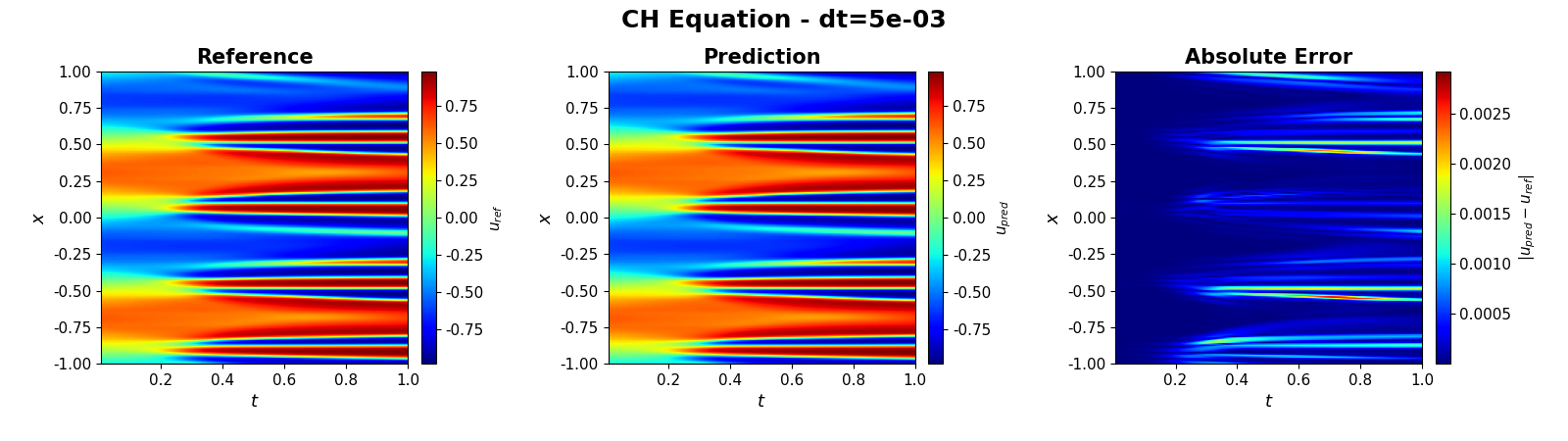}
    \caption{1D Cahn--Hilliard equation with $\Delta t=5\times10^{-3}$: reference solution (left), predicted solution (center), and absolute error (right).}
\end{figure}

\begin{figure}[htbp]
    \centering
    \includegraphics[width=1.0\textwidth]{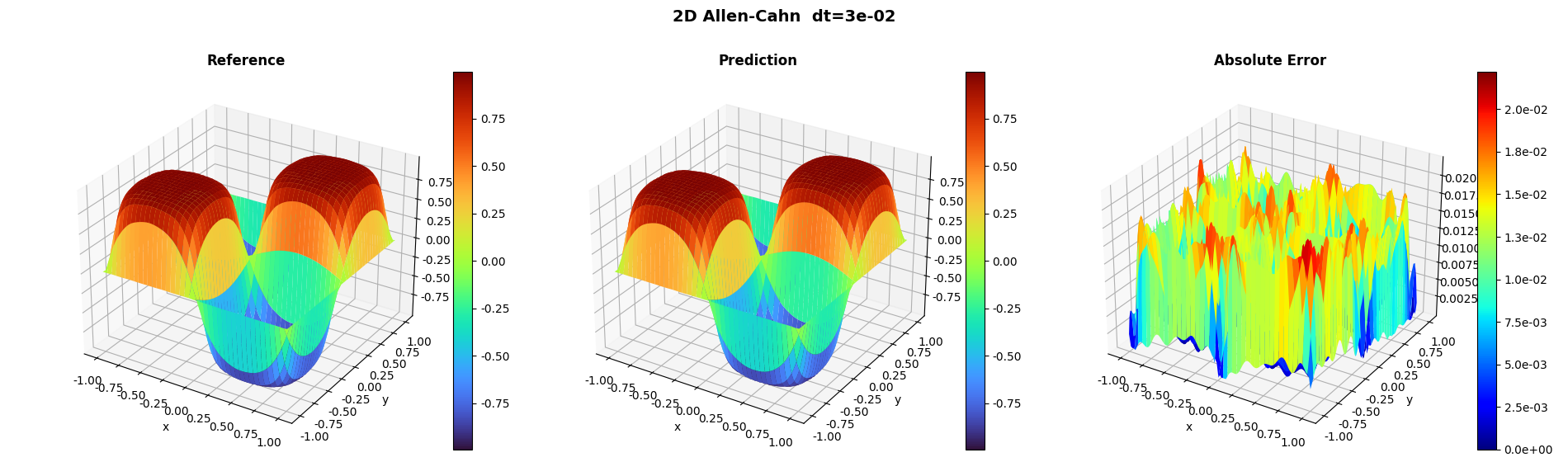}
    \caption{2D Allen--Cahn equation with $\Delta t=3\times10^{-2}$: reference solution (left), predicted solution (center), and absolute error (right).}
\end{figure}

\begin{figure}[htbp]
    \centering
    \includegraphics[width=1.0\textwidth]{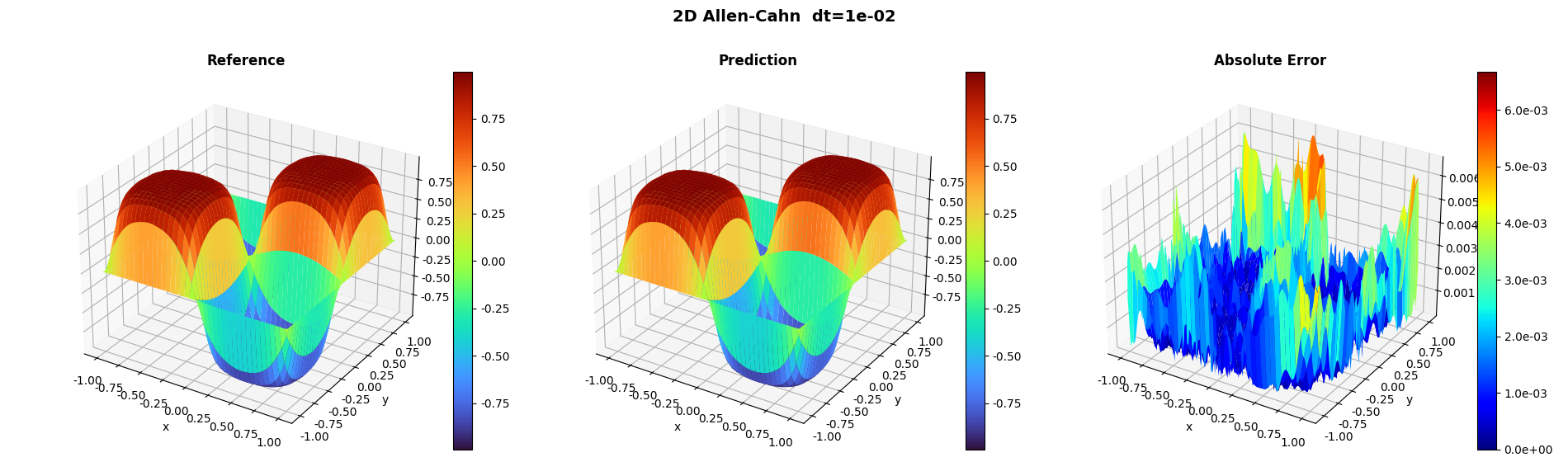}
    \caption{2D Allen--Cahn equation with $\Delta t=10^{-2}$: reference solution (left), predicted solution (center), and absolute error (right).}
\end{figure}

\bibliographystyle{plain}
\bibliography{bib.bib}

\end{document}